\newcommand{\Set}[1]{\mathbb{#1}}
\newcommand{\System}[1]{\mathcal{#1}}
\newcommand{\Tfs}[1]{\hat{#1}}
\newcommand{\Tfz}[1]{\tilde{#1}}
\newcommand{\Irt}[1]{#1}
\newcommand{\Irk}[1]{\bar{#1}}
\newcommand{\Complexes}{\Set{C}}
\DeclareMathOperator*{\trace}{tr}
\DeclareMathOperator*{\tr}{\text{tr}}
\DeclareMathOperator*{\esssup}{ess sup}
\newtheorem{lemma}{Lemma}
\newtheorem{theorem}{Theorem}
\newtheorem{proposition}{Proposition}
\newtheorem{claim}{Claim}
\newtheorem{corollary}{Corollary}
\newtheorem{definition}{Definition}
\newtheorem{rem}{Remark}
\newtheorem{assumption}{Assumption}
\title{Optimal $\Set{H}_2$ Control with Passivity-Constrained Feedback: Convex Approach %
\thanks{This work was supported by National Science Foundation (NSF) award CPS-2206018. Views expressed are those of the authors and do not necessarily reflect those of the NSF.}
}
\author{
J.T. Scruggs
\thanks{The author is with the Departments of Civil and Environmental Engineering, and Electrical and Computer Engineering, at the University of Michigan, Ann Arbor, MI 48019.  email: {\tt jscruggs@umich.edu} }
}
\begin{document} 

\maketitle

\begin{abstract}
We consider the $\Set{H}_2$-optimal feedback control problem, for the case in which the plant is passive with bounded $\Set{L}_2$ gain, and the feedback law is constrained to be output-strictly passive.
We show that this problem distills to a convex, infinite-dimensional optimal control problem, in which the optimization domain is the Youla parameter for the closed-loop system.
We devise truncated, finite-dimensional optimizations to find sub-optimal controllers, and lower bounds on the optimal objective.
Furthermore we show that both these optimizations converge to the optimal objective of the original infinite-dimensional problem as their respective domains are increased. 
The idea is demonstrated on a simple vibration suppression example. 
\end{abstract}

\section{Introduction}

We consider a class of linear feedback control problems, illustrated in Figure \ref{block_diagram}.
The plant $\System{P}$ constitutes the linear time-invariant (LTI) mapping $\{w,u\} \mapsto \{z,y\}$ and is assumed to be in $\Set{H}_\infty$. 
The channel $u \mapsto y$ is assumed to be passive.
We seek the feedback controller $\System{K}$ that minimizes the standard $\Set{H}_2$  objective for closed-loop mapping $w \mapsto z$, over the set $\Set{K}$ of all linear output-strictly-passive (OSP) mappings.

This problem is motivated by technological design constraints that arise in several areas of engineering, including vibration suppression and robotics. 
A vast array of technologies exist for imposition of feedback in mechanical systems, for the purpose of response suppression.
The simplest technologies are comprised of entirely of passive components, for which the imposed ``feedback'' is physically realized directly by the constitutive laws of these components.
In many circumstances passive control technology is the most practical and preferred, as it may be implemented without the need of a power supply or electronics of any kind.
Furthermore, the closed-loop negative feedback connection of two systems is bounded-input-bounded-output stable, if the plant has finite $\Set{L}_2$ gain (i.e., is in $\Set{H}_\infty$) and is passive, and if the feedback law is OSP \cite{van2000l2}. 
As such, even if considerable uncertainty exists in $\System{P}$, if $u \mapsto y$ is known to be passive with finite gain, then an OSP-constrained $\System{K}$ is guaranteed to be stability-robust. 

Passive control technology has been widely applied in mechanical and structural engineering for over a century \cite{jp1985,sun1995,spencer2003,kelly1986,housner1997}. 
Passive electrical networks have also been embedded into mechanical vibratory systems for the purpose of response suppression, as electrical shunt circuits connected to transducers \cite{hagood1990,moheimani2003,behrens2005}.
Passive networks can be optimized via network synthesis, both for electrical \cite{anderson2013} as well as mechanical realizations \cite{smith2002}.
It is a classical result \cite{darlington1984} that a multiport LTI network is passive if and only if the matrix transfer function characterizing its driving-point admittance (i.e., the transfer function for channel $u\mapsto y$) is Positive Real (PR).
Furthermore, any rational PR admittance can be realized via a finite network of elementary linear passive components (i.e., inductors, capacitors, resistors, and transformers for electrical networks, and springs, dashpots, inerters, and levers for mechanical networks).
The classical passive network synthesis methods of Brune \cite{brune1931}, Bott and Duffin \cite{bott1949}, and Darlington \cite{Darlington1931} can be used to find such a network realization \cite{anderson2013}.
Further restriction to networks of dissipative passive devices implies further restriction of the feasibility domain to Strict Positive Real (SPR) rational transfer functions \cite{Taylor1974}.

Active (i.e., externally-powered) sensors and actuators allow for more sophisticated control laws, nonlocal measurement feedback, real-time adaptation, and higher control authority.
In choosing a feedback technology (e.g., active vs passive) for an application, the increased financial and logistical demands associated with active systems must be justified by a tangible improvement in the performance objective they achieve relative to passive systems. 
For this justification to be convincing, the feedback law designed for each candidate technology should be optimized to render the best performance achievable, constrained to that particular technology's physical limitations.
Feedback laws for active and passive technologies, when respectively optimized, may turn out to be rather dissimilar. 

\begin{figure}
\centering
\includegraphics[scale=.8]{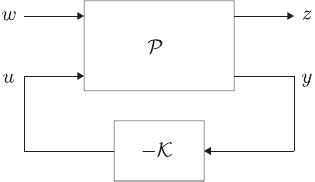}
\caption{Block diagram for feedback system under consideration}
\label{block_diagram}
\end{figure}

Early work related to this idea emerged in the early 1990s in the area of vibration suppression of aerospace structures, by MacMartin and Hall \cite{macmartin1991control,macmartin1991structural}. 
The motivation was to suppress vibrations in large-scale, lightweight space structures.
The heuristic strategy was to achieve favorable vibration suppression performance by optimizing (i.e., maximizing) worst-case net power absorption at the control actuators. evaluated over an uncertainty domain of possible disturbance spectra.
(Because the resultant optimal $\System{K}$ is energy-absorptive at all frequencies, its transfer function $\Tfs{K}(s)$ is guaranteed to be PR.) 
The authors exploited passivity of the plant to frame the problem equivalently as an unconstrained $\Set{H}_\infty$ optimal control problem.
Although an elegant solution was found, the resultant performance was not always favorable.
This is due to the fact that the feedback controller not only influences the power absorbed by the actuators, but also indirectly influences the power injected into the plant by disturbances. 
As such, a controller designed to maximize energy absorption may achieve optimality primarily by inducing the disturbance to inject more power into the plant than in the open-loop case. 

In contrast to this early work, the vast majority of the research on this subject has treated the passivity of $\System{K}$ as a constraint, to be imposed on the optimization of some other objective. 
Particular attention has focused on the optimization of $\System{K}$ for an $\Set{H}_2$ performance objective, with associated Laplace-domain transfer function $\Tfs{K}$ constrained to be strictly positive-real (SPR).
Lozano-Leal and Joshi \cite{lozano1988} introduced a technique which, by appropriate choice of the weights in the $\Set{H}_2$ objective, guarantees to produce a SPR feedback law without the need to impose the constraint explicitly.
The technique has been extended in other studies, including \cite{haddad1994}, which extends it to mixed $\Set{H}_2$/$\Set{H}_\infty$ problems. 
However, the problem solution in \cite{lozano1988} arises only in special circumstances, beyond which the SPR-constrained $\Set{H}_2$ problem has no (known) closed-form solution.
Indeed, it is even unclear under which circumstances the optimal $\Tfs{K}$ is rational \cite{damaren2006}.

Studies by Gapski and Geromel \cite{geromel1997} and Shimomura \cite{shimomura2000,shimomura2002} and more recently Forbes \cite{forbes2013}, used Linear Matrix Inequality (LMI) techniques to develop computationally-efficient design approaches for SPR-constrained 
$\Set{H}_2$ optimal control problems.
In all these approaches, the problem is made tractable by exploiting the Certainty-Equivalence principle, and assuming either the Kalman-Bucy filter (in the case of \cite{geromel1997,shimomura2000,shimomura2002}) or the certainty-equivalent full-state feedback gain (as in \cite{forbes2013}) is the same as for the unconstrained $\Set{H}_2$ problem.
This results in a reduction in the size and complexity of the optimization but also introduces conservatism, as it is not clear that the original problem exhibits certainty-equivalence, or indeed separation of any kind.
Even with conservatism introduced, the optimizations are still nonconvex.
Convexity may be achieved via the use of a common Lyapunov variable, which introduces further conservatism \cite{geromel1997}.

The SPR-constrained optimization of $\System{K}$ under the $\Set{H}_2$ objective can be framed as a bilinear matrix inequality (BMI) problem.
This class of nonconvex optimization problems is computationally expensive but can be solved using various techniques.
In \cite{papageorgiou2006positive} the BMI problem is solved in YALMIP for a local minimum.
In \cite{warner2015}, the same basic problem is approached, and is solved using iterative convex overbounding techniques. 
Other studies have considered the problem of PR and SPR-constrained feedback design, over a domain that is restricted to transfer functions realizable with specified component confgurations.
In  \cite{chen2009restricted}, PR-constrained feedback laws of restricted complexity are optimized, and it is shown that the necessary and sufficient conditions for satisfaction of the PR constraint can be simplified over this restricted domain. 

\subsection{Contributions}

This paper revisits the $\Set{H}_2$ optimal control problem with a passivity constraint on $\System{K}$. The primary contributions are:
\begin{enumerate}
\item
We show that if $\System{P}$ is in $\Set{H}_\infty$ and channel $u \mapsto y$ is passive, then the problem is the solution to an infinite-dimensional convex optimization, in the domain of the closed-loop Youla parameter \cite{newton1958, youla1976modern, vidyasagar1985}.
\item
We show that the optimal $\System{K}$ is the limit of a sequence of finite-dimensional convex optimizations, with each successive optimization conducted over a larger-dimensional parametric domain, and rendering in a monotonically-nonincreasing optimial objective.
This result generalizes a technique devised by Scherer \cite{scherer1995,scherer2000},  for multiobjective $\Set{H}_2/\Set{H}_\infty$ problems. 
\item 
Using duality techniques, we show that the dimension of the parametric domain for the $n^{th}$ optimization in this sequence can be reduced from $\mathcal{O}(n^2)$ to $\mathcal{O}(n)$.  
\item 
We devise a second sequence of convex optimizations of growing domain dimensionality, which render a monotonically-increasing lower bound on the objective.  
\item
We show that both optimization sequences converge to the optimal objective of the original infinite-dimensional problem as their respective dimensionalities increase. 
\end{enumerate}

\subsection{Paper organization}

Section \ref{problem_formulation} precisely states the optimization problem to be solved. 
Section \ref{suboptimal_solution} presents a computationally-simple sub-optimal solution to the optimization problem, which is based on certainty-equivalence, and which is used as a reference for the optimal solution.
Section \ref{convex_solution} provides the reformulation of the optimal control problem as an infinite-dimensional convex optimization (Contribution 1). 
Using this result, Section \ref{finite_dimensional_solutions} formulates a finite-dimensional optimization methodology that asymptotically approaches the global optimum as the dimension of the domain is increased (Contributions 2, 3, and 5).
Section \ref{asymptotic_section} develops the optimization sequence for the lower bound on performance (Contributions 4 and 5).
Section \ref{example} provides two examples related of vibration suppression.
Section \ref{conclusions} contains some brief conclusions.

\subsection{Notation}

The sets of real, complex, and integer numbers are respectively denoted $\Set{R}$, $\Set{C}$, and $\Set{Z}$. The sets of positive (nonnegative) reals are  denoted $\Set{R}_{>0}$ ($\Set{R}_{\geqslant 0}$), with similar conventions for the sets of negative (nonpositive) real numbers. 
The sets of positive (nonnegative) integers are denoted $\Set{Z}_{>0}$ ($\Set{Z}_{\geqslant 0}$), with similar conventions for the sets of negative (nonpositive) integers. 
The sets of complex numbers with positive (nonnegative) real parts are denoted $\Set{C}_{>0}$ ($\Set{C}_{\geqslant 0}$), with similar conventions for sets with negative (nonpositive) real parts.
The sets of complex numbers with moduli greater than (greater than or equal to) one are denoted $\Set{O}_{>1}$ ($\Set{O}_{\geqslant 1}$), with similar conventions for sets with moduli less than (less than or equal to) one. 

For a vector $x \in \Set{C}^n$, $\| x \|_p$ denotes the usual $p$-norm, for $p \in \Set{Z}_{>0}$.
For the special case with $p = \infty$, we have that $\| x \|_\infty = \max_i |x_i|$.
For a matrix $X \in \Set{C}^{n\times m}$, $\| X \|_p$ denotes the analogous $p$-norm, and $\| X \|$ with no subscript denotes the maximum singular value.
For both vectors and matrices $\Irt{X}$ that are Lebesgue measurable functions of $t \in \Set{R}$, we define 
\begin{align*}
\| X \|_{\Set{L}_p} \triangleq & \left[ \int_{-\infty}^\infty \| X(t) \|_p^p dt \right]^{1/p} \\
\| X \|_{\Set{L}_\infty} \triangleq & \esssup_{t\in\Set{R}} \| X(t) \|_\infty.
\end{align*}
Set $\Set{L}_p$ contains all $X : \Set{R} \rightarrow \Set{C}^{n\times m}$, for some $n,m \in \Set{Z}_{>0}$, for which the associated norm is finite. 
For both vectors and matrices $\Irk{X}$ that are indexed by $k \in \Set{Z}$, we define 
\begin{align*}
\| \Irk{X} \|_{\Irk{\Set{L}}_p} \triangleq & \left[ \sum_{k=-\infty}^\infty \| \Irk{X}(k) \|_p^p \right]^{1/p} \\
\| \Irk{X} \|_{\Irk{\Set{L}}_\infty} \triangleq & \sup_{k\in\Set{Z}} \| \Irk{X}(k) \|_\infty.
\end{align*}
Set $\Irk{\Set{L}}_p$ is the set of all $\Irk{X} : \Set{Z} \rightarrow \Set{C}^{n\times m}$, for some $n,m \in \Set{Z}_{>0}$, for which the associated norm is finite. 

LTI, continuous-time systems (irrespective of the number of inputs $m$ and outputs $n$) are denoted in calligraphic font, i.e., $\System{S}$. 
The associated impulse response is denoted $\Irt{S}(t) \in \Set{C}^{n\times m}$, for all $t \in \Set{R}$.  
The associated Laplace-domain transfer function is denoted $\Tfs{S}(s) \in \Set{C}^{n\times m}$, for all $s \in \Set{C}$.
We denote as $\Tfs{\Set{H}}_2$ the set of all Laplace-domain transfer functions $\Tfs{S}(s)$ which are analytic on $\Set{C}_{>0}$ and such that
\begin{equation*}
\| \Tfs{S} \|_{\Tfs{\Set{H}}_2} \triangleq \sup\limits_{\sigma \in \Set{R}_{>0}}  \left[ \frac{1}{2\pi} \int_{-\infty}^\infty \| \Tfs{S}(\sigma+j\omega) \|_2^2 d\omega \right]^{1/2} < \infty.
\end{equation*}
We denote as $\Tfs{\Set{H}}_\infty$ the set of all Laplace-domain transfer functions $\Tfs{S}(s)$ that are analytic on $\Set{C}_{>0}$, and which satisfy the uniform bound
\begin{equation*}
\| \Tfs{S} \|_{\Tfs{\Set{H}}_\infty} \triangleq \sup\limits_{s \in \Set{C}_{>0}} \| \Tfs{S}(s) \|_\infty < \infty.
\end{equation*}

Discrete-time transfer functions are denoted with tildes, i.e., $\Tfz{S}(q) \in \Set{C}^{n\times m}$ for all $q \in \Set{C}$.
We denote as $\Tfz{\Set{H}}_2$ the set of all discrete-time transfer functions $\Tfz{S}(q)$ which are analytic on $\Set{O}_{>1}$ and for which 
\begin{equation*}
\| \Tfz{S} \|_{\Tfz{\Set{H}}_2} \triangleq 
\sup\limits_{\beta \in \Set{R}_{>0}} \left[ \frac{1}{2\pi} \int_{-\pi}^\pi \| \Tfz{S}(e^{\beta+j\Omega}) \|_2^2 d\Omega \right]^{1/2} < \infty.
\end{equation*}
We denote as $\Tfz{\Set{H}}_\infty$ the set of all discrete-time transfer functions $\Tfz{S}(q)$ which are analytic on $\Set{O}_{>1}$, and for which 
\begin{equation*}
\| \Tfz{S} \|_{\Tfz{\Set{H}}_\infty} \triangleq \sup\limits_{q \in \Set{O}_{>1}} \| \Tfz{S}(q) \|_\infty < \infty.
\end{equation*}
The Markov parameters associated with such a discrete-time transfer function are denoted $\Irk{S}(k) \in \Set{C}^{n\times m}$ for all $k \in \Set{Z}$. 

\section{Problem Statement} \label{problem_formulation}

\subsection{Plant model}

\begin{assumption} \label{plant_model_assumption}
We presume that plant $\System{P} : \{w,u\} \mapsto \{z,y\}$ is characterized by the $n_P$-order LTI transfer functions, i.e., 
\begin{equation}
\begin{bmatrix} \Tfs{P}_{wz} & \Tfs{P}_{uz} \\ \Tfs{P}_{wy} & \Tfs{P}_{uy} \end{bmatrix}
= \left[ \begin{array}{l|ll} A & B_w & B_u \\ \hline C_z & 0 & D_{uz} \\ C_y & 0 & D_{uy} \end{array} \right]
\label{plant-input-output-model}
\end{equation}
where $A \in \Set{C}^{n_P\times n_P}$, $B_w \in \Set{C}^{n_P\times n_w}$, $B_u \in \Set{C}^{n_P\times n_u}$, $C_z \in \Set{C}^{n_z\times n_P}$, $C_y \in \Set{C}^{n_u\times n_P}$, $D_{uz} \in \Set{C}^{n_z\times n_u}$, and $D_{uy} \in \Set{C}^{n_u \times n_u}$ are constant.
We assume the above realization is minimal.
\end{assumption}
\begin{rem}
Note that the mapping $w \mapsto \{z,y\}$ is assumed to be strictly proper. 
This is in contrast to the standard assumptions of the $\Tfs{\Set{H}}_2$ optimal control problem, for which $w \mapsto y$ is proper but not strictly so, i.e., that $D_{wy} D_{wy}^H \succ 0$. 
\end{rem}

\begin{assumption} \label{plant_assumption_Hinf}
Plant $\System{P}$ has bounded $\Set{L}_2$ gain, i.e., there exists a $\varrho \in \Set{R}_{>0}$ such that for all $\{u,w\} \in \Set{L}_2 \times \Set{L}_2$, 
\begin{equation*}
\| y \|_{\Set{L}_2}^2 + \| z \|_{\Set{L}_2}^2 \leqslant \varrho^2 \left( \| w \|_{\Set{L}_2}^2 + \| u \| _{\Set{L}_2}^2 \right).
\end{equation*}
\end{assumption}

\begin{rem}
Assumption \ref{plant_assumption_Hinf} is equivalent to the statement that $\Tfs{P}_{ij} \in \Tfs{\Set{H}}_\infty$ for $\{i,j\} \in \{w,u\}\times \{z,y\}$.
\end{rem}

\begin{assumption} \label{plant_assumption_passivity}
Plant $\System{P}$ is such that mapping $u \mapsto y$ is passive, i.e., there exists a function $\beta : \Set{L}_2 \rightarrow \Set{R}_{>0}$ such that for all $\{w,u\} \in \Set{L}_2 \times \Set{L}_2$, 
\begin{equation*}
\int_{-\infty}^\infty \left[ u^H(t) y(t) + y^H(t) u(t) \right] dt \geqslant -\beta(w).
\end{equation*}
\end{assumption}

\begin{rem}
With Assumption \ref{plant_assumption_Hinf}, Assumption \ref{plant_assumption_passivity} holds if and only if $\Tfs{P}_{uy}$ is PR, i.e., it is analytic on $\Set{C}_{>0}$ and 
\begin{equation*}
\Tfs{P}^H_{uy}(s) + \Tfs{P}_{uy}(s) \succeq 0, \ \ \forall s \in \Set{C}_{>0}.
\end{equation*}
This holds if and only if $\Tfs{P}_{uy}$ is analytic on $\Set{C}_{>0}$ and
\begin{equation*}
\Tfs{P}_{uy}^H(j\omega) + \Tfs{P}_{uy}(j\omega) \succeq 0, \ \ \forall \omega \in \Set{R}.
\end{equation*}
\end{rem}

\begin{definition}\label{plant_domain}
We denote $\Set{P}$ as the set of all plants $\System{P}$ adhering to Assumptions \ref{plant_model_assumption}, 
\ref{plant_assumption_Hinf} and \ref{plant_assumption_passivity}.
\end{definition}

\subsection{Feedback domain}

\begin{definition}\label{pssivity-domain}
For some $\epsilon \in \Set{R}_{>0}$, we denote $\Set{K}_\epsilon$ as the set of all LTI feedback mappings $\System{K} : y \mapsto -u$ that are OSP with this parameter, i.e., which satisfy 
\begin{equation*} 
\int_{-\infty}^\infty \left[ u^H(t) y(t) + y^H(t) u(t) \right] dt + \epsilon \| u \|_{\Set{L}_2}^2 \leqslant 0
\end{equation*}
for all $y \in \Set{L}_2$.
\end{definition} 

\begin{rem}
Constraint $\System{K} \in \Set{K}_\epsilon$ holds if and only if $\Tfs{K}(s)$ is analytic and uniformly bounded for all $s\in\Complexes_{>0}$, and  
\begin{equation}
\Tfs{K}^H(s) + \Tfs{K}(s) - \epsilon \Tfs{K}^H(s) \Tfs{K}(s) \succeq 0, \ \forall s \in \Set{C}_{>0}.
\label{K_constraint}
\end{equation}
Furthermore, this is true if and only if $\Tfs{K}(s)$ is analytic and uniformly bounded on $\Set{C}_{>0}$ and 
\begin{equation*}
\Tfs{K}^H(j\omega) + \Tfs{K}(j\omega) - \epsilon \Tfs{K}^H(j\omega) \Tfs{K}(j\omega) \succeq 0, \ \forall \omega \in \Set{R}.
\end{equation*}
We note that constraint $\System{K} \in \Set{K}_\epsilon$ is more restrictive than the requirement that $\Tfs{K}$ be PR, and is distinct from the requirements of strong-SPR, SPR, and weak-SPR \cite{brogliato2019dissipative}.
\end{rem}

\begin{rem}
We note that $\Set{K}_\epsilon \subset \Tfs{\Set{H}}_\infty$.
Specifically the $\Set{L}_2$ gain of all $\System{K} \in \Set{K}_\epsilon$ is bounded from above by $\tfrac{2}{\epsilon}$ \cite{van2000l2}.
\end{rem}

\subsection{Problem statement} \label{problem_statement}

The closed-loop system mapping $\System{T}_{wz} : w \mapsto z$ is characterized by transfer function $\Tfs{T}_{wz}$, as
\begin{align*}
\Tfs{T}_{wz} 
&= \Tfs{P}_{wz} - \Tfs{P}_{uz} \Tfs{K} [ I + \Tfs{P}_{uy} \Tfs{K} ]^{-1} \Tfs{P}_{wy}.
\end{align*}
The feedback-passivity-constrained $\Tfs{\Set{H}}_2$ optimal control problem can be framed as the following optimization:
\begin{equation*} \label{OP1}
\text{OP1} : \left\{ \begin{array}{ll}
\text{Given:} & \System{P} \in \Set{P}, \epsilon \in \Set{R}_{>0} \\
\text{Minimize:} & \gamma \triangleq \| \Tfs{T}_{wz} \|_{\Tfs{\Set{H}}_2}^2 \\
\text{Domain:} & \System{K} \in \Set{K}_\epsilon.
\end{array} \right.
\end{equation*}

\section{A Suboptimal Solution} \label{suboptimal_solution}

This section presents a sub-optimal synthesis technique for OP1.  
It has similarities with the technique proposed in \cite{geromel1997}, in that it assumes a certainty-equivalent controller, which reduces the control design problem to the determination of an associated Luenberger observer gain. 
However, the technique described here differs from \cite{geromel1997}, most notably by the fact that the associated optimization is feasible for all $\System{P} \in \Set{P}$. 
While the technique has only minor novelty in relation to the extant literature, its
inclusion is necessary because it will be used later in the paper to generate a ``reference'' controller, for use in determination of the true optimum. 

The approach is predicated on the use of a certainty-equivalent controller of the form
\begin{equation} \label{49475869797869578}
\Tfs{K} = \left[ \begin{array}{c|c}
A+B_uC_K+B_K(C_y+D_{uy}C_K) & -B_K \\ \hline
-C_K & 0 
\end{array} \right]
\end{equation}
where $C_K$ is a full-state feedback gain and $B_K$ is a full-order observer gain.  
In this context the technique relies on Theorems \ref{4949576070748303} and \ref{59698789585t8gghgi8} below.
Proofs of both are standard, and are omitted from the appendix.

\begin{theorem} \label{4949576070748303} \cite{green1994robust}
Let $\System{P} \in \Set{P}$ and assume $R \triangleq D_{uz}^HD_{uz} \succ 0$ and that the associated Hamiltonian matrix
\begin{equation*}
\begin{bmatrix} A-B_uR^{-1}D_{uz}^HC_z & B_u R^{-1} B_u^H \\ -[C_z^H \left( I - D_{uz} R^{-1} D_{uz}^H \right) C_z] & -[A-B_u R^{-1} D_{uz}^HC_z]^H \end{bmatrix} 
\end{equation*}
has no pure-imaginary eigenvalues. 
Let $\System{K}$ be have a transfer function parametrized as in \eqref{49475869797869578}, where
\begin{equation*}
C_K = -R^{-1} [B_u^HJ+D_{uz}^HC_z]
\end{equation*}
and $J=J^H$ is the stabilizing solution to Riccati equation
\begin{align} \label{49457686968494849495856}
0 =& A^H J + J A + C_z^H C_z - C_K^H R C_K.
\end{align}
Let $B_K$ be an observer gain satisfying $W =W^H \succeq 0$, where 
\begin{equation} \label{394949586869695845}
0 = [A+B_KC_y] W+W [A+B_KC_y]^H + B_wB_w^H
\end{equation}
Then 
\begin{equation} \label{4f9fg99ru4hg0ghg}
\gamma = \gamma_u + \tr( C_K W C_K^H R )
\end{equation}
where $\gamma_u \triangleq \tr( B_w^H J B_w )$.
\end{theorem}

\begin{theorem} \label{59698789585t8gghgi8} \cite{brogliato2019dissipative}
For $\System{K}$ as in \eqref{49475869797869578}, $\System{K}\in\Set{K}_\epsilon$ if and only if there exists $Z=Z^H\succ0$ such that $ZB_K = C_K^H$ and 
\begin{equation} \label{5959668679779699585}
ZA_Z + A_Z^HZ + C_K^H C_Z+ C_Z^H C_K + \epsilon C_K^H C_K 
\preceq 0
\end{equation}
where $A_Z \triangleq A+B_uC_K$ and $C_Z \triangleq C_y+D_{uy}C_K$.
\end{theorem}

From Theorems \ref{4949576070748303} and \ref{59698789585t8gghgi8} it follows that $Z$ can be optimized over the domain characterized by inequality \eqref{5959668679779699585},
 to minimize $\gamma$ as in \eqref{4f9fg99ru4hg0ghg}.
 Auxiliary variable $B_K=Z^{-1}C_K^H$, and auxiliary variable $W$ is found from $B_K$ via \eqref{394949586869695845}.
However, this optimization is nonconvex, and there is no obvious way to recover convexity without the introduction of conservatism.
The following proposition provides one such approach.

\begin{proposition} \label{485696879790786}
Let $\System{P}\in\Set{P}$ and $\epsilon \in \Set{R}_{>0}$, and assume the conditions of Theorem \ref{4949576070748303} hold.
Define 
\begin{align*}
G_p \triangleq & [\epsilon I+D_{uy}+D_{uy}]^{1/2} C_K + [\epsilon+D_{uy}+D_{uy}]^{-1/2} C_y \\
G_n \triangleq & [\epsilon I+D_{uy}+D_{uy}]^{-1/2} C_y.
\end{align*}
Let $Z_0=Z_0^H\succ 0$ satisfy Lyapunov inequality
\begin{align}
\label{4494868676797u}
Z_0 A_Z + A_Z^H Z_0 + G_p^H G_p \preceq & 0
\end{align}
where the associated matrix $A_C \triangleq A + Z_0^{-1} C_K^H C_y$ is Hurwitz.
Let $X_0=X_0^H \succ 0$ satisfy 
\begin{equation}
A_C X_0 + X_0 A_C^H + B_wB_w^H \preceq 0.
\label{4948567686878797}
\end{equation}
Then for any $\Psi=\Psi^H \succ0$, there exist $X=X^H\succ 0$ and $Y=Y^H\succ 0$ satisfying 
\begin{align} 
\label{0303838576869687484859}
\begin{bmatrix} \left( \begin{array}{c} A_C[X-X_0]+ [X-X_0]A_C^H \\ + [Y-Z_0^{-1}] C_K^H C_y X_0 \\
+ X_0 C_y^HC_K[Y-Z_0^{-1}] 
 \end{array} \right)& \star & \star \\
C_K[Y-Z_0^{-1}] & -\Psi^{-1} & \star \\ 
C_y[X-X_0] & 0 & -\Psi \end{bmatrix} &
\nonumber \\ 
\preceq 0 &
\\
\label{4949377384950960687659}
\begin{bmatrix}
\left( \begin{array}{c}
A_Z Y + Y A_Z^H  + Z_0^{-1} G_n^H G_n Z_0^{-1}
\\ - Y G_n^H G_n Z_0^{-1}  - Z_0^{-1} G_n^H G_n Y
\end{array} \right) & \star \\
G_p^H Y & -I \end{bmatrix} \preceq  0 &
\end{align}
where $\star$ denotes Hermitian symmetry.
For feasible $X$ and $Y$, let $\System{K}$ be defined by its transfer function as in \eqref{49475869797869578} with $B_K = YC_K^H$. Then $\System{K}\in\Set{K}_\epsilon$ and satisfies objective bound 
\begin{equation*}
\gamma \leqslant \gamma_u + \tr( C_K X C_K^H R ).
\end{equation*}
\end{proposition}

We thus arrive at a conservative but convex optimization for designing a sub-optimal $\System{K}\in\Set{K}_\epsilon$, as
\begin{equation*} \label{OP2}
\text{OP2} : \left\{ \begin{array}{ll}
\text{Given:} & \System{P} \in \Set{P}, \epsilon \in \Set{R}_{>0} , \Psi=\Psi^H\succ 0 \\
\text{Minimize:} & \tr( C_K X C_K^H R ) \\
\text{Domain:} & X=X^H, Y=Y^H \\
\text{Constraints:} & \eqref{0303838576869687484859},  \eqref{4949377384950960687659}, X \succ 0, Y \succ 0
\end{array} \right.
\end{equation*}

\begin{rem}
Values for $Z_0$ and $X_0$ which satisfy the conditions of Proposition \ref{485696879790786} are in general not unique. 
One convenient approach is find $Z_0$ as the solution to \eqref{4494868676797u}
and find $X_0$ as the solution to \eqref{4948567686878797}, both tightened to equalities. 
If $(A_Z,G_p)$ is not observable then the term $G_p^H G_p$ in \eqref{4494868676797u} must be positively perturbed to assure $Z_0 \succ 0$.  
Similarly, if $(A_C,B_w)$ is not controllable then the term $B_wB_w^H$ must be positively perturbed in \eqref{4948567686878797} to assure $X_0 \succ 0$. 
\end{rem}

\begin{rem}
Although OP2 is guaranteed to return a feasible solution for any $\Psi=\Psi^H\succ 0$, the objective $\gamma$ attained by this solution is sensitive to the specific value of $\Psi$ that is chosen. 
OP2 can be iterated with different values of $\Psi$ to improve $\gamma$, by making $\Psi$ an optimization variable as well.  
Supposing $\Psi_k$ is the value of $\Psi$ at iteration $k$, one can replace the term $-\Psi^{-1}$ in \eqref{0303838576869687484859} with the more conservative $-2\Psi_k^{-1} + \Psi_k^{-1} \Psi \Psi_k^{-1}$ to arrive at an optimization that is convex in $\Psi$ as well as the other variables. 
Execution of OP2 with this substitution results in a new value $\Psi = \Psi_{k+1}$, and the objective attained must be no worse than that attained with $\Psi=\Psi_k$, 
As such, this iteration produces a monotonically-nonincreasing objective.
\end{rem}

\section{Optimal Solution} \label{convex_solution}

This section develops a convex approach to solving OP1 without the introduction of conservatism.
First we convert the problem to an equivalent discrete-time system optimization via a bilinear transformation.
Then we establish certain spectral factorization results that play a central role in the solution.
Following this, we establish the associated Youla parameter for the problem.
Finally, we recast OP1 in terms of the Youla parameter and show that the resultant optimization is convex. 

\subsection{Bilinear transformation}

In order to solve OP1, it is convenient to first convert the problem to one in which the norms and domain constraints are in $\Tfz{\Set{H}}_2$ and $\Tfz{\Set{H}}_\infty$ rather than $\Tfs{\Set{H}}_2$ and $\Tfs{\Set{H}}_\infty$. Toward this end, we employ the bilinear transformation
\begin{equation} \label{q_from_s}
q \triangleq \frac{1+2\tau s}{1-2\tau s} 
\end{equation}
for $\tau \in \Set{R}_{>0}$. 
It is straightforward to verify that the above relationship maps $\Set{C}_{>0} \rightarrow \Set{O}_{>1}$. 
The inverse mapping is
\begin{equation*}
s = \frac{1}{2\tau} \frac{q-1}{q+1} 
\end{equation*}
which is analytic in $q$ for all $q \in \Set{O}_{>1}$.
Consequently, for $\{i,j\} \in \{w,u\} \times \{z,y\}$, $\Tfs{P}_{ij}(s)$ is analytic and uniformly bounded for all $s \in \Set{C}_{>0}$ if and only if
\begin{equation*}
\Tfz{P}_{ij}(q) \triangleq \Tfs{P}_{ij}\left( \frac{1}{2\tau} \frac{q-1}{q+1} \right)
\end{equation*}
is analytic and uniformly bounded for all $q \in \Set{O}_{>1}$. 
It is also the case that for $\Omega \in [-\pi,\pi]$, 
\begin{equation*}
\Tfz{P}_{ij}(e^{j\Omega}) = \Tfs{P}_{ij} (j\omega(\Omega))
\end{equation*}
where
\begin{equation*}
\omega(\Omega) = \tfrac{1}{2\tau} \tanh(\tfrac{1}{2} \Omega )
\end{equation*}
and we note that
\begin{align*}
\frac{d}{d\Omega} \omega(\Omega)
=& \frac{1}{\tau} \frac{1}{|1+e^{j\Omega}|^2}.
\end{align*}
As such,  $\System{P} \in \Set{P}$ implies that $\Tfz{P}_{ij} \in \Tfz{\Set{H}}_\infty$ and that 
\begin{equation}
\Tfz{P}_{uy}^H(q) + \Tfz{P}_{uy}(q) \succeq 0, \ \ \forall q \in \Set{O}_{>1},
\label{posreal_discrete_time_1}
\end{equation}
which holds if and only if 
\begin{equation*}
\Tfz{P}_{uy}^H(e^{j\Omega}) + \Tfz{P}_{uy}(e^{j\Omega}) \succeq 0, \ \ \forall \Omega \in [-\pi,\pi].
\end{equation*}

\subsection{Spectral factorization}

We have that $\Tfz{P}_{ij}$ can be expressed as
\begin{align*}
\Tfz{P}_{ij}
=& \left[ \begin{array}{c|c} \Phi & \Gamma_i \\ \hline \Pi_j & \Delta_{ij} \end{array} \right]\end{align*}
where
\begin{align*}
\Phi \triangleq& (I+2\tau A)(I-2\tau A)^{-1} \\
\Gamma_i \triangleq&  2\tau (I-2\tau A)^{-1} B_i \\
\Pi_j \triangleq& 2C_j (I-2\tau A)^{-1} \\
\Delta_{ij} \triangleq& D_{ij} + 2\tau C_j (I-2\tau A)^{-1} B_i.
\end{align*}
In terms of these matrices, the following theorem is a variant of the KYP Lemma. 

\begin{theorem}\label{PRLemma} \cite{brogliato2019dissipative}
Let $\epsilon, \tau \in \Set{R}_{>0}$ and assume $\System{P}$ satisfies Assumptions \ref{plant_model_assumption} and \ref{plant_assumption_Hinf}.  Then $\System{P}$ satisfies Assumption \ref{plant_assumption_passivity} if and only if there exists a matrix $\Sigma_P =\Sigma_P^H \succeq 0$, and compatible matrices $\Pi_S$ and $\Delta_S$, such that 
\begin{multline*}
\begin{bmatrix} \Phi^H \Sigma_P \Phi - \Sigma_P & \Phi^H \Sigma_P \Gamma_u - \Pi_y^H \\ 
\Gamma_u^H \Sigma_P \Phi - \Pi_y & \Gamma_u^H \Sigma_P \Gamma_u-\Delta_{uy} - \Delta_{uy}^H - \epsilon I \end{bmatrix} \\ = -\begin{bmatrix} \Pi_S^H \\ \Delta_S^H \end{bmatrix} \begin{bmatrix} \Pi_S & \Delta_S \end{bmatrix}.
\end{multline*}
Furthermore, for
\begin{equation*}
\Tfz{S} \triangleq \left[ \begin{array}{c|c} \Phi_S & \Gamma_S \\ \hline \Pi_S & \Delta_S \end{array} \right]
\end{equation*}
with $\Gamma_S = \Gamma_u$ and $\Phi_S = \Phi$, it follows that $\Tfz{S}, \Tfz{S}^{-1} \in \Tfz{\Set{H}}_\infty$ and 
\begin{equation*}
\Tfz{P}_{uy}(q) + \Tfz{P}_{uy}^H(q) + \epsilon I = \Tfz{S}^H(q) \Tfz{S}(q), \ \ \forall q \in \Set{O}_{>1}.
\end{equation*}
\end{theorem}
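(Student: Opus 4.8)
The plan is to read Theorem~\ref{PRLemma} as the discrete-time Positive Real Lemma coupled with the canonical (minimum-phase) spectral factorization of the resulting spectral density, and to prove the two implications separately, the ``factorization exists'' direction being the substantive one. To set up, note that Assumption~\ref{plant_assumption_Hinf} forces $A$ to be Hurwitz, so the Cayley transform $\Phi = (I+2\tau A)(I-2\tau A)^{-1}$ has all its eigenvalues strictly inside the unit disk; in particular $\Phi$ has no unimodular eigenvalues and $\tilde{P}_{uy}(q) = \Pi_y(qI-\Phi)^{-1}\Gamma_u + \Delta_{uy}$ is a stable (stabilizable, detectable) realization in $\tilde{\Set{H}}_\infty$. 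Moreover, by the bilinear-transformation facts already established, Assumption~\ref{plant_assumption_passivity} is equivalent to $\tilde{P}_{uy}(e^{j\Omega}) + \tilde{P}_{uy}^H(e^{j\Omega}) \succeq 0$ for all $\Omega$, so the augmented spectral density $\Psi(e^{j\Omega}) \triangleq \tilde{P}_{uy}(e^{j\Omega}) + \tilde{P}_{uy}^H(e^{j\Omega}) + \epsilon I$ satisfies $\Psi(e^{j\Omega}) \succeq \epsilon I \succ 0$ on the entire unit circle.

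For the direction ``Assumption~\ref{plant_assumption_passivity} $\Rightarrow$ existence of $\{\Sigma_P,\Pi_S,\Delta_S\}$,'' I would invoke the standard discrete-time spectral-factorization/Riccati theory \cite{anderson2013}: since $\Psi$ is strictly positive definite on the unit circle and $\Phi$ has no unimodular eigenvalues, the discrete algebraic Riccati equation obtained by forcing the Schur complement of the $(2,2)$ block of the theorem's matrix (with respect to $R_\Sigma \triangleq \Delta_{uy}+\Delta_{uy}^H+\epsilon I-\Gamma_u^H\Sigma_P\Gamma_u$) to vanish has a Hermitian stabilizing solution $\Sigma_P$ with $R_\Sigma \succ 0$. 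Taking $\Delta_S$ to be any square factor with $\Delta_S^H\Delta_S = R_\Sigma$ and $\Pi_S \triangleq \Delta_S^{-H}(\Gamma_u^H\Sigma_P\Phi - \Pi_y)$, a Schur-complement identity then shows $\{\Sigma_P,\Pi_S,\Delta_S\}$ satisfy the displayed matrix equation. That $\Sigma_P\succeq 0$ I would obtain from a storage-function argument: a PR, stable $\tilde{P}_{uy}$ is dissipative with respect to the supply rate $u^Hy+y^Hu$ with a nonnegative storage function, so the associated linear matrix inequality admits a positive-semidefinite solution, and a standard comparison shows the stabilizing solution $\Sigma_P$ is itself positive semidefinite. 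With the matrix equation in hand, the factorization identity is a one-line ``completing-the-square'' computation: writing $\zeta(q) \triangleq (qI-\Phi)^{-1}\Gamma_u$, so that $\Phi\zeta+\Gamma_u = q\zeta$, and substituting $\Pi_S^H\Pi_S = \Sigma_P-\Phi^H\Sigma_P\Phi$, $\Pi_S^H\Delta_S = \Pi_y^H-\Phi^H\Sigma_P\Gamma_u$, and $\Delta_S^H\Delta_S = R_\Sigma$, one obtains $\tilde{S}^H(q)\tilde{S}(q) - \big(\tilde{P}_{uy}(q)+\tilde{P}_{uy}^H(q)+\epsilon I\big) = (1-|q|^2)\,\zeta^H(q)\Sigma_P\zeta(q)$, which vanishes on $|q|=1$ and, reading the superscript $H$ as para-conjugation, extends by analytic continuation to all of $\Set{O}_{>1}$. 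Finally $\tilde{S}\in\tilde{\Set{H}}_\infty$ since its state matrix is $\Phi$ (Schur), and $\tilde{S}^{-1}\in\tilde{\Set{H}}_\infty$ since its state matrix $\Phi-\Gamma_u\Delta_S^{-1}\Pi_S = \Phi-\Gamma_u R_\Sigma^{-1}(\Gamma_u^H\Sigma_P\Phi-\Pi_y)$ is Schur precisely by the stabilizing property of $\Sigma_P$.

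The converse is the more routine direction. Given $\Sigma_P = \Sigma_P^H\succeq 0$ and $\Pi_S,\Delta_S$ satisfying the matrix equation, I would reverse the computation above to recover $\tilde{P}_{uy}(q)+\tilde{P}_{uy}^H(q)+\epsilon I = \tilde{S}^H(q)\tilde{S}(q)\succeq 0$ on $|q|=1$ (with $\tilde{S},\tilde{S}^{-1}\in\tilde{\Set{H}}_\infty$ as before); equivalently, the matrix equation states precisely that $x\mapsto x^H\Sigma_P x$ is a storage function for $(\Phi,\Gamma_u,\Pi_y,\Delta_{uy})$ with respect to the supply rate $u^Hy+y^Hu+\epsilon u^Hu$, and since $\Sigma_P\succeq 0$ and $\Phi$ is Schur the resulting dissipation estimate, together with the Remark following Assumption~\ref{plant_assumption_passivity}, returns $\hat{P}_{uy}$ to being PR, i.e.\ establishes Assumption~\ref{plant_assumption_passivity}.

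The main obstacle is the forward direction: one must use the full strength of the positive-real property (not merely strict positivity of $\Psi$ on the circle) to guarantee simultaneously that the Riccati equation admits a \emph{stabilizing} Hermitian solution, that this solution is \emph{positive semidefinite}, and that it yields a \emph{minimum-phase} factor with $\tilde{S}^{-1}\in\tilde{\Set{H}}_\infty$ --- and all of this while the realization $(\Phi,\Gamma_u,\Pi_y,\Delta_{uy})$ of $\tilde{P}_{uy}$ is, in general, only stabilizable and detectable rather than minimal (the minimality hypothesis of Assumption~\ref{plant_model_assumption} applies to the full plant $\mathcal{P}$, not to this sub-block), so the non-minimal form of the Riccati/KYP theory is required.
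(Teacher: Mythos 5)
The paper itself offers no proof of Theorem~\ref{PRLemma} --- it is asserted as ``another variant of the KYP Lemma'' with the standard reference --- so there is nothing to compare line by line. Your forward direction (passivity $\Rightarrow$ factorization) is the substantive half and is essentially the textbook argument: $\Psi \succeq \epsilon I$ on the circle plus $\Phi$ Schur gives a stabilizing Hermitian DARE solution with $R_\Sigma \succ 0$, and the completing-the-square identity
$\tilde{S}^H(q)\tilde{S}(q) - \Psi(q) = (1-|q|^2)\,\zeta^H(q)\Sigma_P\zeta(q)$
checks out; your remark that the off-circle equality must be read with para-conjugation is a fair correction to the statement. Two small notes: your definition $\Pi_S \triangleq \Delta_S^{-H}(\Gamma_u^H\Sigma_P\Phi - \Pi_y)$ has the wrong sign relative to the $(2,1)$ block of the matrix equation (and is inconsistent with the substitution $\Pi_S^H\Delta_S = \Pi_y^H - \Phi^H\Sigma_P\Gamma_u$ you use two lines later); and positive semidefiniteness of $\Sigma_P$ needs no storage-function or reachability argument at all, since the $(1,1)$ block gives $\Sigma_P - \Phi^H\Sigma_P\Phi = \Pi_S^H\Pi_S \succeq 0$ and iterating with $\Phi$ Schur yields $\Sigma_P = \sum_{k\geqslant 0}(\Phi^k)^H\Pi_S^H\Pi_S\Phi^k \succeq 0$ directly, sidestepping the non-minimality worry you raise at the end.

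The genuine gap is in your converse. The matrix equality says exactly that $x \mapsto x^H\Sigma_P x$ certifies dissipativity with respect to the supply rate $u^Hy + y^Hu + \epsilon\,u^Hu$; on the unit circle this yields only $\tilde{P}_{uy}(e^{j\Omega}) + \tilde{P}_{uy}^H(e^{j\Omega}) \succeq -\epsilon I$, which is strictly weaker than the positive-real condition $\succeq 0$, so the final step ``returns $\hat{P}_{uy}$ to being PR'' does not follow. Indeed the ``if'' direction of the theorem as literally stated fails: take $C_y = 0$ and $D_{uy} = -\epsilon/4$, so $\tilde{P}_{uy} \equiv -\epsilon/4$ is not PR, yet $\Sigma_P = 0$, $\Pi_S = 0$, $\Delta_S = \sqrt{\epsilon/2}\,I$ satisfy the displayed equation. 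So no argument can close this direction without strengthening the hypothesis (e.g., requiring the equation to hold for all sufficiently small $\epsilon > 0$, or dropping the $\epsilon I$ term in the $(2,2)$ block). Fortunately the only direction the paper actually uses downstream --- existence of the invertible spectral factor $\tilde{S}$ given a passive plant --- is the one you prove correctly, but you should either flag the converse as false as stated or restrict your claim to the forward implication rather than presenting a purported proof of both.
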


Our analysis also employs the following related factorization result, which is straight-forward to verify.

\begin{corollary}\label{PRLemma_Corr}
For the conditions of Theorem \ref{PRLemma} we have that there exists a $\Tfz{F}$ such that $\Tfz{F}, \Tfz{F}^{-1} \in \Tfz{\Set{H}}_\infty$, and 
\begin{equation*}
[ \Tfz{P}_{uy}(q) + \Tfz{P}_{uy}^H(q) + \epsilon I ]^{-1} = \Tfz{F}^H(q) + \Tfz{F}(q), \ \ \forall q \in \Set{O}_{>1}.
\end{equation*}
Specifically, 
\begin{equation*}
\Tfz{F} = \left[ \begin{array}{c|c} \Phi_F & \Gamma_F \\ \hline \Pi_F & \Delta_F \end{array} \right]
\end{equation*}
with
\begin{align*}
\Phi_F =& \Phi_S - \Gamma_S \Delta_S^{-1} \Pi_S \\
\Gamma_F =& \left[ \Gamma_S\Delta_S^{-1} - \Phi_F \Sigma_F \Pi_S^H \right] \Delta_S^{-H} \\
\Pi_F =& -\Delta_S^{-1} \Pi_S \\
\Delta_F =& \tfrac{1}{2} \Delta_S^{-1} \left[ I + \Pi_S \Sigma_F \Pi_S^H \right] \Delta_S^{-H}
\end{align*}
and where $\Sigma_F = \Sigma_F^H$ is the solution to 
\begin{equation*}
\Sigma_F = \Phi_F \Sigma_F \Phi_F^H + \Gamma_S\Delta_S^{-1}\Delta_S^{-H} \Gamma_S^H.
\end{equation*}
\end{corollary}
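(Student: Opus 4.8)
The plan is to identify $\tilde{F}$ as the \emph{stable (causal) part} of the para-Hermitian spectral density $[\tilde{P}_{uy}+\tilde{P}_{uy}^H+\epsilon I]^{-1}$, and to read off its realization from that of $\tilde{S}^{-1}$. By Theorem~\ref{PRLemma} we have $\tilde{P}_{uy}(q)+\tilde{P}_{uy}^H(q)+\epsilon I=\tilde{S}^H(q)\tilde{S}(q)$ with $\tilde{S},\tilde{S}^{-1}\in\tilde{\Set{H}}_\infty$, hence $[\tilde{P}_{uy}(q)+\tilde{P}_{uy}^H(q)+\epsilon I]^{-1}=\tilde{S}^{-1}(q)\tilde{S}^{-H}(q)$ on $\Set{O}_{>1}$. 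A direct inversion of the realization of $\tilde{S}$ (state matrix $\Phi_S$, input $\Gamma_S$, output $\Pi_S$, feedthrough $\Delta_S$) shows that $\tilde{S}^{-1}$ has state matrix $\Phi_F=\Phi_S-\Gamma_S\Delta_S^{-1}\Pi_S$, output matrix $\Pi_F=-\Delta_S^{-1}\Pi_S$, input matrix $\Gamma_S\Delta_S^{-1}$, and feedthrough $\Delta_S^{-1}$; since $\tilde{S}^{-1}\in\tilde{\Set{H}}_\infty$ the matrix $\Phi_F$ is Schur, so the Stein equation $\Sigma_F=\Phi_F\Sigma_F\Phi_F^H+\Gamma_S\Delta_S^{-1}\Delta_S^{-H}\Gamma_S^H$ has a unique solution $\Sigma_F=\Sigma_F^H\succeq0$, namely the reachability Gramian of $\tilde{S}^{-1}$. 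Applying the standard state-space additive decomposition of a stable spectrum $MM^\sim$ into its causal part plus half its feedthrough (with $M=\tilde{S}^{-1}$ and Gramian $\Sigma_F$) produces exactly the realization of $\tilde{F}$ in the statement: state matrix $\Phi_F$, output $\Pi_F$, input $\Gamma_F=[\Gamma_S\Delta_S^{-1}-\Phi_F\Sigma_F\Pi_S^H]\Delta_S^{-H}$, and feedthrough $\Delta_F=\tfrac12\Delta_S^{-1}[I+\Pi_S\Sigma_F\Pi_S^H]\Delta_S^{-H}$.

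I would then verify the identity $\tilde{F}(q)+\tilde{F}^H(q)=\tilde{S}^{-1}(q)\tilde{S}^{-H}(q)$ by substituting this realization and collapsing the cross terms using the Stein equation; this is the ``straightforward to verify'' step and is routine linear algebra. It follows at once that $\tilde{F}\in\tilde{\Set{H}}_\infty$, since $\Phi_F$ is Schur and $\tilde{F}$ is proper.

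The one point that needs care, and where I expect the main obstacle, is $\tilde{F}^{-1}\in\tilde{\Set{H}}_\infty$. Because $\tilde{P}_{uy}\in\tilde{\Set{H}}_\infty$, the matrix $\tilde{P}_{uy}(e^{j\Omega})+\tilde{P}_{uy}^H(e^{j\Omega})+\epsilon I$ is uniformly bounded above on the unit circle, so its inverse satisfies $\tilde{F}(e^{j\Omega})+\tilde{F}^H(e^{j\Omega})\succeq\delta I$ for some $\delta>0$. For each fixed vector $v$, the scalar $\Real\{v^H\tilde{F}(q)v\}$ is harmonic on $\Set{O}_{>1}$ (including $q=\infty$, where $\tilde{F}(\infty)=\Delta_F\succ0$ because $\Delta_S$ is nonsingular and $I+\Pi_S\Sigma_F\Pi_S^H\succeq I$), it is bounded since $\tilde{F}$ is proper, and on $|q|=1$ its boundary value equals $\tfrac12 v^H[\tilde{F}(q)+\tilde{F}^H(q)]v\geq\tfrac{\delta}{2}\|v\|^2$; hence by the minimum principle for harmonic functions $\Real\{v^H\tilde{F}(q)v\}>0$ throughout $\Set{O}_{>1}$. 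Consequently $\tilde{F}(q)$ is nonsingular for every $q\in\Set{O}_{\geqslant 1}\cup\{\infty\}$: on $|q|\geqslant 1$ a null vector $v$ of $\tilde{F}(q)$ would force $v^H[\tilde{F}(q)+\tilde{F}^H(q)]v=0$, and at infinity $\Delta_F\succ0$. Therefore $\tilde{F}^{-1}$ is a proper rational transfer function analytic on $\Set{O}_{\geqslant 1}$, i.e.\ $\tilde{F}^{-1}\in\tilde{\Set{H}}_\infty$. A reader might want the harmonic-function/behaviour-at-infinity argument spelled out more fully; an equivalent but more computational alternative is to verify directly that the state matrix $\Phi_F-\Gamma_F\Delta_F^{-1}\Pi_F$ of $\tilde{F}^{-1}$ is Schur.
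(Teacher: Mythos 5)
Your construction is correct and is precisely the verification the paper has in mind: the paper omits the proof (calling the corollary ``straight-forward to verify''), and the realization $(\Phi_F,\Gamma_F,\Pi_F,\Delta_F)$ in the statement is exactly what your route produces --- invert the spectral factor $\tilde{S}$ from Theorem~\ref{PRLemma}, take the causal part of $\tilde{S}^{-1}\tilde{S}^{-H}$ using the reachability Gramian $\Sigma_F$, and confirm $\tilde{F}^{-1}\in\tilde{\Set{H}}_\infty$ from the uniform lower bound $\tilde{F}(e^{j\Omega})+\tilde{F}^H(e^{j\Omega})\succeq\delta I$ together with $\Delta_F\succ 0$. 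No gaps.
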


\subsection{Youla parametric domain}

Let
\begin{align*}
\Tfz{T}_{wz}(q) \triangleq & \Tfs{T}_{wz}\left( \frac{1}{2\tau} \frac{1-q}{1+q} \right) \\
\Tfz{K}(q) \triangleq & \Tfs{K} \left( \frac{1}{2\tau} \frac{1-q}{1+q} \right).
\end{align*}
Similarly to the observations above, we have that $\Tfz{T}_{wq}, \Tfz{K} \in \Tfz{\Set{H}}_\infty$ if and only if $\Tfs{T}_{wz}, \Tfs{K} \in \Tfs{\Set{H}}_\infty$.  
Furthermore, we have that
\begin{equation*}
\Tfz{T}_{wz} = \Tfz{P}_{wz} - \Tfz{P}_{uz} \Tfz{K} [I+\Tfz{P}_{uy} \Tfz{K} ]^{-1} \Tfz{P}_{wy}
\end{equation*}
and for $\Tfs{T}_{wz} \in \Tfs{\Set{H}}_2$, the objective of OP1 is
\begin{align}
\gamma
=& \frac{1}{2\pi} \int_{-\infty}^\infty \trace\{ \Tfs{T}_{wz}^H(j\omega) \Tfs{T}_{wz}(j\omega) \} d\omega \nonumber
\\
=& \frac{1}{2\pi} \int_{-\pi}^\pi \trace\{ \Tfz{T}_{wz}^H(e^{j\Omega}) \Tfz{T}_{wz}(e^{j\Omega}) \} \frac{d\omega(\Omega)}{d\Omega} d\Omega \nonumber
\\
=& \frac{1}{\tau} \| \Tfz{T}_{fz} \|_{\Tfz{\Set{H}}_2}^2
\label{Tfz_norm}
\end{align}
where
\begin{align*}
\Tfz{T}_{fz}
=& \Tfz{P}_{fz} - \Tfz{P}_{uz} \Tfz{K} [ I + \Tfz{P}_{uy} \Tfz{K} ]^{-1} \Tfz{P}_{fy} 
\end{align*}
with 
\begin{align*}
\Tfz{P}_{fz}(q) =& \frac{1}{1+q} \Tfz{P}_{wz}(q), & 
\Tfz{P}_{fy}(q) =& \frac{1}{1+q} \Tfz{P}_{wy}(q).
\end{align*}
Furthermore, it is straightforward to verify that 
\begin{align*}
\begin{bmatrix} \Tfz{P}_{fz} \\ \Tfz{P}_{fy} \end{bmatrix}
=& \left[ \begin{array}{c|c} \Phi & \Gamma_f \\ \hline \Pi_z & 0 \\ \Pi_y & 0 \end{array} \right]
\end{align*}
where
\begin{equation*}
\Gamma_f = \tau B_w
\end{equation*}

Let the Youla parameter $\Tfz{Q}$ be defined as
\begin{equation}
\Tfz{Q} \triangleq \Tfz{K} [I+\Tfz{P}_{uy}\Tfz{K}]^{-1}
\label{Q_from_K}
\end{equation}
with the inverse relationship being
\begin{equation}
\Tfz{K} = [I-\Tfz{Q}\Tfz{P}_{uy}]^{-1} \Tfz{Q}.
\label{K_from_Q}
\end{equation}
Then we have that 
\begin{equation}
\Tfz{T}_{fz} = \Tfz{P}_{fz} - \Tfz{P}_{uz} \Tfz{Q} \Tfz{P}_{fy}
\label{Tfz_from_Q}
\end{equation}
which, from \eqref{Tfz_norm}, implies that the minimization objective of OP1 is quadratic and convex in $\Tfz{Q}$. 
The key observation of this paper is that the domain constraint for OP1 is also convex in $\Tfz{Q}$, as is shown in the following theorem.

\begin{theorem} \label{59595686877005}
Assume $\System{P} \in \Set{P}$ and let $\epsilon, \tau \in \Set{R}_{>0}$. Then $\System{K} \in \Set{K}_\epsilon$ if and only if $\Tfz{Q} \in \Tfz{\Set{H}}_\infty$ and if
\begin{equation}
\Tfz{U} \triangleq \begin{bmatrix} \Tfz{Q} & \Tfz{Q} \\ 0 & \Tfz{F} \end{bmatrix}
\label{4758686748459845}
\end{equation}
satisfies 
\begin{equation}
\Tfz{U}(e^{j\Omega}) + \Tfz{U}^H(e^{j\Omega}) \succeq 0, \ \forall \Omega\in[-\pi,\pi].
\label{Q_constraint}
\end{equation}
\end{theorem}

\subsection{Convex reformulation of OP1 in Youla domain}

We parametrize $\Tfz{Q}(q)$ via its infinite set of Markov parameters $Q(k)$, $k \in \Set{Z}_{\geqslant 0}$, as 
\begin{equation*} 
\Tfz{Q}(q) = \sum\limits_{k=0}^\infty \Irk{Q}(k) q^{-k} 
\end{equation*}
where we note that coefficients $\Irk{Q}(k)$ for $k<0$ must be zero in order for $\Tfz{Q} \in \Tfz{\Set{H}}_\infty$. 
Using the Youla parametrization from the previous subsection and the definitions above, we can reformulate OP1 as  OP3, below. 
\begin{equation*}
\text{OP3} : \left\{ \begin{array}{ll}
\text{Given:} & \System{P} \in \Set{P}, \epsilon, \tau \in \Set{R}_{>0} \\
\text{Minimize:} & \gamma  \\
\text{Domain:} & \Irk{Q}(k), \forall k \in \Set{Z}_{\geqslant 0} \\
\text{Constraint:} & \eqref{Q_constraint}
\end{array} \right.
\end{equation*}
As such, OP3 has a countably-infinite-dimensional domain, a convex quadratic objective on this domain, and uncountably-infinite convex constraints, each of which is a LMI parametrized by $\Omega\in[-\pi,\pi]$. 
Let the optimum objective be denoted $\gamma^\star$, and the associated optimum Markov sequence be denoted $\{ \Irk{Q}^\star(k), k \in \Set{Z}_{\geqslant 0} \}$.

\begin{theorem}\label{uniqueness_theorem}
For the solution to OP3, 
\begin{equation} \label{49485768jgig8uhgh}
\sum\limits_{k=0}^\infty \| \Irk{Q}^\star (k) \|^2 < \infty.
\end{equation}
Moreover, define $\Set{W} \subseteq [-\pi,\pi]$ as 
\begin{multline*}
\Set{W} = \big\{ \Omega \in [-\pi,\pi] \ : \ \Tfz{P}_{uz}^H(e^{j\Omega}) \Tfz{P}_{uz}(e^{j\Omega}) \succ 0, \\ \land \ \Tfz{P}_{fy}(e^{j\Omega}) \Tfz{P}_{fy}^H(e^{j\Omega}) \succ 0 \big\}
\end{multline*}
and let $\Set{W}'$ be its complement. 
Then if $\Set{W}'$ has zero measure, the solution to OP3 is unique.
\end{theorem}

\subsection{Independence of $\tau$}

Regarding the choice of $\tau$, the theorem below establishes that all choices are equivalent, in the sense that they all result in the same $\gamma^\star$ in OP3.

\begin{theorem} \label{45050607007594848586}
Let $\tau_a, \tau_b \in \Set{R}_{>0}$ and let the optimal objectives of OP3 be denoted $\gamma_a^\star$ and $\gamma_b^\star$, and let $\Tfz{Q}_a^\star$ and $\Tfz{Q}_b^\star$ be the corresponding optimal discrete-time Youla parameters. 
 Then $\gamma^\star_a=\gamma^\star_b$.
Furthermore, if the conditions of Theorem \ref{uniqueness_theorem} hold then $ \| \Tfs{Q}_a^\star - \Tfs{Q}_b^\star \|_{\Tfs{\Set{H}}_2} = 0$, where $\Tfs{Q}_a^\star$ and $\Tfs{Q}_b^\star$ are the corresponding continuous-time optimal Youla parameters obtained with \eqref{q_from_s} with $\tau_a$ and $\tau_b$.
\end{theorem}

\section{Truncated Optimal Solutions}  \label{finite_dimensional_solutions}

\subsection{Incremental Youla parameter} \label{incremental}

In principle, OP3 provides a convex equivalent to OP1, the solution of which is the stated goal of this paper.
However, OP3 requires optimization over a countably-infinite domain, and is therefore impractical.
To be implemented, the number of nonzero Markov parameters must be truncated to a finite number, $n$, resulting in a sequence of suboptimal solutions that asymptotically approaches optimality as $n\rightarrow\infty$.  
Practically, evaluation of this sequence is ended when $n$ becomes sufficiently large that  convergence can be established up to a specified tolerance. 
It is consequently advantageous to employ techniques that reduce the value of $n$ that is needed to achieve convergence.

One means of accomplishing this is to solve for the optimal solution as an incremental improvement relative to the suboptimal design from Section \ref{suboptimal_solution}. For clarity, we refer to the bilinear-transformed suboptimal controller described in Section \ref{suboptimal_solution} as $\Tfz{K}_0$, and express $\Tfz{K} =  \Tfz{K}_0 + \Tfz{K}_1$
where $\Tfz{K}_1 \in \Tfz{\Set{H}}_\infty$ is the incremental feedback law to be designed.
Denote $u_1 \triangleq -\System{K}_1 y$.
Then we have that  $\System{K}_0$ can be subsumed into the plant model, resulting in a mapping $\System{P}_1 : \{f,u_1\} \mapsto \{z,y\}$ characterized by 
\begin{align}
\left[ \begin{array}{ll} \Tfz{P}_{1,fz} & \Tfz{P}_{1,u_1z} \\ \Tfz{P}_{1,fy} & \Tfz{P}_{1,u_1y} \end{array} \right]
=&
\begin{bmatrix} \Tfz{P}_{fz}-\Tfz{P}_{uz}\Tfz{Q}_0\Tfz{P}_{fy} & \Tfz{P}_{uz} \Tfz{M}_0 
\\ \Tfz{N}_0 \Tfz{P}_{fy} & \Tfz{N}_0 \Tfz{P}_{uy} 
\end{bmatrix}
\label{V_transfer_functions}
\end{align}
where
\begin{align*}
\Tfz{Q}_0 \triangleq & \Tfz{K}_0 [ I+\Tfz{P}_{uy}\Tfz{K}_0]^{-1} \\
\Tfz{M}_0 \triangleq & [I+\Tfz{K}_0\Tfz{P}_{uy}]^{-1} \\
\Tfz{N}_0 \triangleq & [I+\Tfz{P}_{uy}\Tfz{K}_0]^{-1}.
\end{align*}
With incremental controller $\System{K}_1$ imposed, the closed-loop transfer function characterizing mapping $f \mapsto z$ is
\begin{equation} \label{373484758567}
\Tfz{T}_{fz} = \Tfz{P}_{1,fz} - \Tfz{P}_{1,u_1z} \Tfz{Q}_1 \Tfz{P}_{1,fy}
\end{equation}
where the incremental Youla parameter $\Tfz{Q}_1$ is
\begin{equation*}
\Tfz{Q}_1 = \Tfz{K}_1 [ I + \Tfz{P}_{1,u_1y} \Tfz{K}_1 ]^{-1} .
\end{equation*}
It is straightforward to verify that $\Tfz{Q}$ is related to $\Tfz{Q}_1$ via
\begin{equation}
\label{67967685875959}
\Tfz{Q} = \Tfz{Q}_0 + \Tfz{M}_0  \Tfz{Q}_1 \Tfz{N}_0.
\end{equation}
The constraint $\System{K}\in\Set{K}_\epsilon$ therefore holds if and only if \eqref{Q_constraint} holds with $\Tfz{U}$ as in \eqref{4758686748459845} evaluated with $\Tfz{Q}$ as in \eqref{67967685875959}. 

\subsection{Primal formulation}\label{primal_formulation}

To finite-dimensionalize the optimization domain, we parametrize $\Tfz{Q}_1$ by its first $n+1$ Markov coefficients, assuming the rest are zero, i.e., 
\begin{equation}\label{FIR}
\Tfz{Q}_1(q) = \sum\limits_{k=0}^n \Irk{Q}_1(k) q^{-k}
\end{equation}
for some $n\in\Set{Z}_{>0}$.
For convenience, define $x_n$ as
\begin{equation}\label{x_def}
x_n \triangleq \textrm{vec}\{ \Irk{Q}_1(0),...,\Irk{Q}_1(n) \} .
\end{equation}
For a given $n$ we denote the dependency of the objective on the truncated Youla parameters as $\gamma_n(x_n)$.
It can be evaluated in closed-form, as formalized in the following claim.

\begin{claim} \label{quadratic_performance_objective}
There exist $\gamma_{n,0}\in\Set{R}$, $g_n\in\Set{C}^{(n+1)n_u^2}$, and $H_n=H_n^H\in\Set{C}^{(n+1)n_u^2\times (n+1)n_u^2}$ such that
\begin{equation} \label{484577686874747686}
\gamma_n(x_n) = \gamma_{n,0} + \tfrac{1}{2} \left( g_n^H x_n + x_n^H g_n + x_n^H H_n x_n \right)
\end{equation}
where $H_n \succeq 0$.
Details regarding the derivation of $\gamma_{n,0}$, $g_n$, and $H_n$ are given in the appendix.
\end{claim}

Constraint \eqref{Q_constraint} is infinite-dimensional, but may be replaced by an equivalent finite-dimensional constraint through the use of auxiliary variables.
To present this result, we first note that $\Tfz{U}$ has a state space realization 
\begin{equation} \label{495869669}
\Tfz{U} = \left[ \begin{array}{cc|c}
	 \Phi_{U11} & \Phi_{U12}(x_n) & \Gamma_{U1}(x_n) \\
	 0                & \Phi_{U22}         & \Gamma_{U2}        \\ \hline
	 \Pi_{U1}     & \Pi_{U2}(x_n)     & \Delta_U(x_n)
\end{array} \right] 
\end{equation}
where $\Phi_{U11}$, $\Phi_{U22}$, $\Gamma_{U2}$, and $\Pi_{U1}$ are independent of $x_n$, and $\Pi_{U12}(x_n)$, $\Gamma_{U1}(x_n)$, $\Pi_{U2}(x_n)$ and $\Delta_U(x_n)$ are affine in $x_n$.
In this context, the following variant of the KYP Lemma.  

\begin{lemma} \label{KYP_theorem}
For $\Tfz{U}$ realized as in \eqref{495869669}, it satisfies \eqref{Q_constraint} if and only if there exist compatible matrices $\Sigma_{U1}=\Sigma_{U1}^H \succ 0$, $\Sigma_{U2}=\Sigma_{U2}^H \succ 0$, and $\Lambda_U$ such that 
\begin{align}
&
\begin{bmatrix} 
0 & 0 & 0  \\
\Pi_{U1} \Lambda_U - \Pi_{U2} & -\Delta_U & 0  \\
\Phi_{U12}-\Phi_{U11} \Lambda_U + \Lambda_U \Phi_{U22} & \Gamma_{U1}+\Lambda_U \Gamma_{U2} & 0
\end{bmatrix}^H
\nonumber \\ & 
+
\begin{bmatrix} 
0 & 0 & 0  \\
\Pi_{U1} \Lambda_U - \Pi_{U2} & -\Delta_U & 0  \\
\Phi_{U12}-\Phi_{U11} \Lambda_U + \Lambda_U \Phi_{U22} & \Gamma_{U1}+\Lambda_U \Gamma_{U2} & 0
\end{bmatrix}
\nonumber \\ &
+
\begin{bmatrix} 
0 & 0 & 0 \\
0 & \Pi_{U1} \Sigma_{U1} \Pi_{U1}^H & -\Pi_{U1} \Sigma_{U1} \Phi_{U11}^H  \\
0 & -\Phi_{U11} \Sigma_{U1} \Pi_{U1}^H & \Phi_{U11} \Sigma_{U1} \Phi_{U11}^H-\Sigma_{U1}
\end{bmatrix} 
\nonumber \\ & 
+
\begin{bmatrix} 
\Phi_{U22}^H \Sigma_{U2} \Phi_{U22} - \Sigma_{U2} & \Phi_{U22}^H \Sigma_{U2} \Gamma_{U2} & 0 \\
\Gamma_{U2}^H \Sigma_{U2} \Phi_{U22} & \Gamma_{U2}^H \Sigma_{U2} \Gamma_{U2} & 0 \\
0 & 0 & 0 
\end{bmatrix} 
\preceq 0
\label{convex_pr_lmi}
\end{align}
\end{lemma}

Noting that matrix inequality \eqref{convex_pr_lmi} is linear in $x_n$, $\Sigma_{U1}$, $\Sigma_{U2}$, and $\Lambda_U$, 
we therefore have the following convex, finite-dimensional optimization for the optimal objective, $\gamma_n^\star$, attainable with truncated Youla parameters: 
\begin{equation*}\label{OP4}
\text{OP4} : \left\{ \begin{array}{ll}
\text{Given:} & \System{P} \in \Set{P}, \epsilon, \tau \in \Set{R}_{>0}, n \in \Set{Z}_{>0} \\
\text{Minimize:} & \gamma_n (x_n) \text{ as in } \eqref{484577686874747686} \\
\text{Domain:} & x_n,  \Sigma_{U1} = \Sigma_{U1}^H, \Sigma_{U2} = \Sigma_{U2}^H, \Lambda_U  \\
\text{Constraints:} & \eqref{convex_pr_lmi}, \Sigma_{U1} \succ 0, \Sigma_{U2} \succ 0 
\end{array} \right.
\end{equation*}
The objective in OP4 is convex and quadratic, while the finite-dimensional constraint is a LMI.
Due to its finite-dimensionality and convexity, OP4 can be solved efficiently.

\subsection{Existence and uniqueness}

Clearly OP4 is guaranteed to be feasible, because \eqref{convex_pr_lmi} is satisfied with $x_n=0$.
Convexity and positive-semidefiniteness of the objective and constraint, together with the fact that the problem has guaranteed feasibility, ensure that OP4 has a global minimum, $\gamma_n^\star$. 
However, these facts do not assure that the problem has a unique corresponding minimizer, $x_n^\star$. However, this can be assured by the imposition of extra assumptions on $\System{P}$, as illustrated by the following corollary to Theorem \ref{uniqueness_theorem}, the proof of which is virtually identical. 

\begin{corollary}
For OP4, define $\Set{W}' \subseteq [-\pi,\pi]$ as in Theorem \ref{uniqueness_theorem}.  Then $x_n^\star$ are unique if $\Set{W}'$ has zero measure. 
\end{corollary}

\subsection{Asymptotic convergence} \label{asymptotic}

We consider the asymptotic behavior of $\gamma_n^\star$ and associated minimizers $\{\Irk{Q}_{n}^\star(k), \ k \in \{0,...,n\} \}$ as $n \rightarrow \infty$.
In this context, we have the following convergence theorem.

\begin{theorem} \label{459569960707096069059}
For OP3, let the associated Youla parameter be denoted $\Tfz{Q}^\star$ and the corresponding optimum objective of OP3 be denoted $\gamma^\star$.
Let $\Tfz{Q}_{1,n}^\star$ and $\gamma_n^\star$ be the solution to OP4, and let $\Tfz{Q}_n^\star \triangleq \Tfz{Q}_0 + \Tfz{M}_0 \Tfz{Q}_{1,n}^\star \Tfz{N}_0$ be the corresponding total Youla parameter associated with this solution,. 
Then $\gamma_n^\star$ is monotonically nonincreasing in $n$, and $\lim_{n\rightarrow\infty} \gamma_n^\star = \gamma^\star$.
Furthermore, if the conditions of Theorem \ref{uniqueness_theorem} hold then 
\begin{equation}
\label{76978696585}
\lim\limits_{n\rightarrow\infty} \| \Tfz{Q}^\star - \Tfz{Q}_n^\star \|_{\Tfz{\Set{H}}_2} = 0.
\end{equation}
\end{theorem}

\subsection{Dual formulation} \label{duality_section}

As $n$ is increased, the optimization domain for OP4 increases like $\mathcal{O}(n^2)$ due to the usage of matrix variable $\Sigma_U$.
There is a considerable advantage to the reformulation of OP4 in its dual domain, for which the dimensionality increases linearly with $n$ rather than quadratically.

Introduce Lagrange multiplier matrix $\Upsilon = \Upsilon^H \succeq 0$, partitioned with the same dimensions as in LMI \eqref{convex_pr_lmi}, i.e., 
\begin{equation*}
\Upsilon = \begin{bmatrix} \Upsilon_{11} & \Upsilon_{12} & \Upsilon_{13} \\ \Upsilon_{12}^H & \Upsilon_{22} & \Upsilon_{33} \\ \Upsilon_{13}^H & \Upsilon_{23}^H & \Upsilon_{33} \end{bmatrix}
\end{equation*}
Then OP4 is expressed equivalently as
\begin{equation*}
\min\limits_{x_n, \Sigma_{U1}=\Sigma_{U1}^H, \Sigma_{U2}=\Sigma_{U2}^H, \Lambda_U} \max\limits_{\Upsilon=\Upsilon^H \succeq 0} \gamma_{pd}(x_n,\Sigma_{U1},\Sigma_{U2},\Lambda_U, \Upsilon)
\end{equation*}
where $\gamma_{pd}$ is the Lagrangian (i.e., primal-dual) objective, i.e., 
{\small
\begin{align*}
& \gamma_{pd} = 
\gamma_{n,0} + \tfrac{1}{2} \left( g_n^H x_n + x_n^H g_n + x_n^H H_n x_n \right)
\nonumber \\ & 
+ \tr\left\{ \begin{bmatrix} \Upsilon_{11} & \Upsilon_{12} \\ \Upsilon_{12}^H & \Upsilon_{22} \end{bmatrix} \begin{bmatrix} \left( \begin{array}{c} \Phi_{U22}^H\Sigma_{U2}\Phi_{U22} \\ -\Sigma_{U2} \end{array} \right) & \Phi_{U22}^H\Sigma_{U2}\Gamma_{U2} \\ \Gamma_{U2}^H\Sigma_{U2}\Phi_{U22} & \Gamma_{U2}^H\Sigma_{U2}\Gamma_{U2} \end{bmatrix} \right\}
\nonumber \\ & 
+ \tr\left\{ 
\begin{bmatrix} \Upsilon_{22} & \Upsilon_{23} \\ \Upsilon_{23}^H & \Upsilon_{33} \end{bmatrix}
\begin{bmatrix} \Pi_{U1}\Sigma_{U1}\Pi_{U1}^H & -\Pi_{U1}\Sigma_{U1}\Phi_{U11}^H \\ -\Phi_{U11}\Sigma_{U1}\Pi_{U1}^H & \left( \begin{array}{c} \Phi_{U11} \Sigma_{U1}\Phi_{U11}^H \\ - \Sigma_{U1} \end{array} \right) \end{bmatrix}  
\right\}
\nonumber \\ & 
+ \tr\left\{
\begin{bmatrix} \Upsilon_{12} & \Upsilon_{13} \\ \Upsilon_{22} & \Upsilon_{23} \end{bmatrix}
\begin{bmatrix} \Pi_{U1} \Lambda_U & 0 \\ -\Phi_{U11} \Lambda_U + \Lambda_U \Phi_{U22} & \Lambda_U \Gamma_{U2} \end{bmatrix}
\right\} 
\nonumber \\ & 
+ \tr\left\{
\begin{bmatrix} \Upsilon_{12} & \Upsilon_{13} \\ \Upsilon_{22} & \Upsilon_{23} \end{bmatrix}^H
\begin{bmatrix} \Pi_{U1} \Lambda_U & 0 \\ -\Phi_{U11} \Lambda_U + \Lambda_U \Phi_{U22} & \Lambda_U \Gamma_{U2} \end{bmatrix}^H
\right\} 
\nonumber \\ & 
+ \tr\left\{
\begin{bmatrix} \Upsilon_{12} & \Upsilon_{13} \\ \Upsilon_{22} & \Upsilon_{23} \end{bmatrix}
\begin{bmatrix} -\Pi_{U2}(x_n) & -\Delta_U(x_n) \\ \Phi_{U12}(x_n) & \Gamma_{U1}(x_n) \end{bmatrix}
\right\} 
\nonumber \\ &
+ \tr\left\{
\begin{bmatrix} \Upsilon_{12} & \Upsilon_{13} \\ \Upsilon_{22} & \Upsilon_{23} \end{bmatrix}^H
\begin{bmatrix} -\Pi_{U2}(x_n) & -\Delta_U(x_n) \\ \Phi_{U12}(x_n) & \Gamma_{U1}(x_n) \end{bmatrix}^H
\right\} 
\end{align*}
}%
In order for $\gamma_{pd}$ to be extremal in $\Sigma_{U1}$, $\Sigma_{U2}$, and $\Lambda_U$, the Lagrange multipliers must satisfy 
\begin{align}
\Upsilon_{11} =& \begin{bmatrix} \Phi_{U22} & \Gamma_{U2} \end{bmatrix} \begin{bmatrix} \Upsilon_{11} & \Upsilon_{12} \\ \Upsilon_{12}^H & \Upsilon_{22} \end{bmatrix} \begin{bmatrix} \Phi_{U22}^H \\ \Gamma_{U2}^H \end{bmatrix}.
\label{57676768697978797}
\\
\Upsilon_{33} =& \begin{bmatrix} -\Pi_{U1}^H & \Phi_{U11}^H \end{bmatrix} \begin{bmatrix} \Upsilon_{22} & \Upsilon_{23} \\ \Upsilon_{23}^H & \Upsilon_{33} \end{bmatrix}
\begin{bmatrix} -\Pi_{U1} \\ \Phi_{U11} \end{bmatrix}
\label{494938739595687696}
\\
0 =& -\Upsilon_{12} \Pi_{U1} - \Upsilon_{13} \Phi_{U11} + \Phi_{U22} \Upsilon_{13} + \Gamma_{U2} \Upsilon_{23}
\label{40586767696854485}
\end{align}
Because $\Tfz{U}\in\Tfz{\Set{H}}_\infty$ the moduli of the eigenvalues of $\Phi_{U11}$ and $\Phi_{U22}$ are less than $1$, and consequently \eqref{57676768697978797} and \eqref{494938739595687696} uniquely determine $\Upsilon_{11}$ and $\Upsilon_{33}$ in terms of $\Upsilon_{12}$, $\Upsilon_{22}$, and $\Upsilon_{23}$. 
Equation \eqref{40586767696854485} is a Sylvester equation, and if $\Phi_{U11}$ and $\Phi_{U22}^H$ have no eigenvalues in common, it determines a $\Upsilon_{13}$ uniquely in terms of $\Upsilon_{12}$ and $\Upsilon_{23}$. 
However, due to the specifics of the way the realization for $\tilde{U}$ was constructed, this is generally not the case, and consequently, \eqref{40586767696854485} determines a subspace in which the parameters $\{ \Upsilon_{12}, \Upsilon_{13}, \Upsilon_{22}, \Upsilon_{23} \}$ must lie.

There exist a vector $h_n$ and matrix $E_n$ such that
\begin{equation*}
\textrm{vec} \left\{ \begin{bmatrix} -\Pi_{U2}(x_n) & -\Delta_{U}(x_n) \\ \Phi_{U12}(x_n) & \Gamma_1(x_n) \end{bmatrix} \right\}
= h_n + E_n x_n.
\end{equation*}
Letting 
\begin{equation}
\psi_n = \textrm{vec} \left\{ \begin{bmatrix} \Upsilon_{12} & \Upsilon_{13} \\ \Upsilon_{22} & \Upsilon_{23} \end{bmatrix}^H \right\}
\label{383477578656878}
\end{equation}
we then have that 
\begin{align*}
\gamma_{pd} =& \gamma_{0,n} + \psi_n^H \left( h_n+E_nx_n \right) + \left( h_n+E_nx_n \right)^H \psi_n \nonumber \\
& + \tfrac{1}{2} \left( g_n^H x_n + x_n^H g_n + x_n^H H_n x_n \right).
\end{align*}
The dual objective, denoted $\gamma_{d,n}$, is then the minimization of $\gamma_{pd}$ over $x_n$.
Because the primal problem is known to be feasible, it is known that the dual objective is bounded from above, for all $x_n$. 
Consequently, the null space of $H_n$, if it is nonempty, must be contained in the null spaces of $2\psi_n^H E_n+g_n^H$ for all feasible $\psi_n$. 
In other words, if the range space of some matrix $\eta_H$ spans the null space of $H_n$, then we require that $\psi_n$ satisfy $(2\psi_n^H E_n + g_n^H)\eta_H = 0$. 
As such,
\begin{align}
\gamma_{d,n}(\psi_n) =& \gamma_{0,n} + \psi_n^H h_n + h_n^H \psi_n \nonumber \\ 
&  - \tfrac{1}{2} (2E_n^H\psi_n+g_n)^H H_n^{\dagger} (2E_n^H\psi_n+g_n) 
\label{57686978695858}
\end{align}
where $(\cdot)^\dagger$ denotes the Moore-Penrose inverse.

We thus arrive at the dual formulation of OP4, as
\begin{equation*}\label{OP5}
\text{OP5} : \left\{ \begin{array}{ll}
\text{Given:} & \System{P} \in \Set{P}, \epsilon, \tau \in \Set{R}_{>0}, n \in \Set{Z}_{>0} \\
\text{Maximize:} & \gamma_{d,n}(\psi_n) \text{ as in } \eqref{57686978695858} \\
\text{Domain:} & \psi_n \text{ as in }\eqref{383477578656878} \\
\text{Constraints:} & \Upsilon(\psi_n) \succeq 0, \eqref{40586767696854485}, \\
&   (2\psi_n^H E_n + g_n^H)\eta_H = 0
\end{array} \right.
\end{equation*}
where $\Upsilon(\psi_n)$ is characterized uniquely by \eqref{57676768697978797}, \eqref{494938739595687696}, and \eqref{383477578656878}.
The advantage of OP5 comes from the fact that the dimension of $\Upsilon_{13}$ and $\Upsilon_{23}$ grow linearly with $n$, while the dimensions of $\Upsilon_{12}$ and $\Upsilon_{22}$ are independent of $n$.  Consequently, the optimization domain scales like $\mathcal{O}(n)$ rather than $\mathcal{O}(n^2)$ as with OP4. 

Let $\psi_n^\star$ be a maximizer of OP5. 
If the conditions of Theorem \ref{uniqueness_theorem} hold then the parameter $x_n^\star$ corresponding to $\psi_n^\star$ must be unique.
If this is the case it must be that $H_n \succ 0$, and therefore invertible. 
The corresponding primal optimal parameters is obtained from the dual solution via
\begin{equation*}
x_n^\star = -H_n^{-1} \left( 2E_n^H\psi_n^\star + g_n \right).
\end{equation*}

\section{Lower Bound on Optimallity} \label{asymptotic_section}

The truncated solution technique in the previous section results in an incremental Youla parameter $\Tfz{Q}_1$ which is optimal, over the domain of all transfer functions for which only the first $n+1$ Markov parameters (with a given bilinear parameter $\tau$) are nonzero.
Theorem \ref{459569960707096069059} assures that as $n\rightarrow\infty$, the minimized objective $\gamma_n^\star$ obtained with this technique monotonically converges from above to $\gamma^\star$, the solution to the infinite dimensional problem OP3.  
Theorem \ref{45050607007594848586} establishes that $\gamma^\star$ is also the optimal objective of original problem OP1, independently of $\tau \in \mathbb{R}$.
In this section we devise a second finite-dimensional optimization algorithm, which finds a lower bound on $\gamma^\star$. 
This is important because it allows for us to ascertain a finite $n$ for which OP4 (or equivalently OP5) solves OP1, up to a desired tolerance. 

For $n \in \mathbb{Z}_{>0}$, define
\begin{equation}
\label{4885ghjgig9guhif8gvugvhjvigfiu}
\check{\gamma}_n \triangleq \frac{1}{\tau} \sum\limits_{k=0}^n \tr\left( \bar{T}_{fz}^H(k) \bar{T}_{fz}(k) \right).
\end{equation}
It is an immediate consequence of Plancharel's theorem that for all $\System{P} \in \Set{P}$, $\Tfz{Q} \in \Tfz{\Set{H}}_2$, and $n \in \mathbb{Z}_{>0}$, $\check{\gamma}_n \leqslant \gamma$. 
Due to causality of $\Tfz{P}$ and $\Tfz{Q}$, it follows that $\Irk{T}_{fz}(k)$ depends on $\Irk{Q}(\ell)$ for $\ell \in \{0,...,k\}$ and so $\check{\gamma}_n$ is independent of $\Irk{Q}(k)$ for $k>n$. 

Next we introduce a relaxation of constraint \eqref{Q_constraint}, as the following constraint:
\begin{equation}
\label{V_constraint}
\Tfz{V}(e^{j\Omega}) + \Tfz{V}^H(e^{j\Omega}) \succeq 0, \ \forall \Omega \in [-\pi,\pi]
\end{equation}
where the first $n+1$ Markov coefficients of $\Tfz{V}$ are required to have the form
\begin{equation*}
\Irk{V}(k) = \begin{bmatrix} \Irk{Q}(k) & 0 \\ \Irk{Q}(k) & \Irk{F}(k) \end{bmatrix}, \ \forall k \in \{0,...,n\}
\end{equation*}
whereas $\Irk{V}(k)$ for $k > n$ can be chosen freely to satisfy \eqref{V_constraint}.
That \eqref{V_constraint} is a relaxation of \eqref{Q_constraint} can be seen by noting that the two constraints become the same if $\Irk{V}(k)$ is further restricted to have the above form for all $k >n$. 

Next, temporarily assume that $\Irk{V}(k) = 0$ for $k > m$, for some $m>n$.
In this case $\Tfz{V}$ has the realization
\begin{align*}
\Tfz{V} = \left[ \begin{array}{c|c} \Phi_{V,m} & \Gamma_{V,m} \\ \hline \Pi_{V,m} & \Delta_V \end{array} \right]
\end{align*}
where
\begin{align*}
\Phi_{V,m} =& 
\begin{bmatrix} 0 & I_{2(m-1)n_u} \\ 0 & 0 \end{bmatrix}, &
\Gamma_{V,m} =& \begin{bmatrix} 0 \\ I_{2n_u} \end{bmatrix}, \\
\Pi_{V,m} =& \begin{bmatrix} \Irk{V}(m) & \cdots & \Irk{V}(1) \end{bmatrix}, &
\Delta_V =&  \Irk{V}(0)
\end{align*}
In this case, the KYP lemma gives that $\Tfz{V}$ satisfies \eqref{V_constraint} if and only if there exists a matrix $\Sigma_V = \Sigma_V^H$ such that 
\begin{equation}
\label{49495965uyhjhig9gughjg9gug}
\begin{bmatrix}
\Phi_{V,m}^H \Sigma_V \Phi_{V,m} - \Sigma_V & \Phi_{V,m}^H \Sigma_V \Gamma_{V,m} - \Pi_{V,m}^H \\
\Gamma_{V,m}^H \Sigma_V \Phi_{V,m} - \Pi_{V,m} & \Gamma_{V,m}^H \Sigma_V \Gamma_{V,m} - \Delta_V -\Delta_V^H \end{bmatrix} \preceq 0.
\end{equation}
In this context the minimization of \eqref{4885ghjgig9guhif8gvugvhjvigfiu} subject to \eqref{49495965uyhjhig9gughjg9gug} is equivalently stated as the minimax problem
\begin{equation*}
\inf\limits_{ \Sigma_V=\Sigma_V^H, \ \Irk{Q}(k), k \in \Set{Z}_{\geqslant 0} } \  \sup\limits_{\Xi=\Xi^H\succeq 0}
\check{\gamma}_{pd}
\end{equation*}
where $\check{\gamma}_{pd}$ is the primal-dual Lagrangian, i.e., 
\begin{align*}
&\check{\gamma}_{pd}
= \check{\gamma}_n
\nonumber \\ &  + \tr\left( \Xi
\begin{bmatrix}
\Phi_{V,m}^H \Sigma_V \Phi_{V,m} - \Sigma_V & \Phi_{V,m}^H \Sigma_V \Gamma_{V,m} - \Pi_V^H \\
\star & \Gamma_{V,m}^H \Sigma_V \Gamma_V - \Delta_V -\Delta_V^H \end{bmatrix}
\right)
\end{align*}
and $\Xi$ is a matrix of Lagrange multipliers. 
Because the problem is convex and Slater's condition holds, we can switch the minimization and maximization operations, and find the dual function as
\begin{align*}
\check{\gamma}_{d}
=
& \inf\limits_{ \Sigma_V=\Sigma_V^H, \ \Irk{Q}(k), k \in \Set{Z}_{\geqslant 0} } 
\Bigg\{ \check{\gamma}_n
- \tr\left( \Xi \begin{bmatrix} 0 & \Pi_{V,m}^H \\ \Pi_{V,m} & \Delta_V+\Delta_V^H \end{bmatrix} \right)
\nonumber \\ & 
+ \tr\left( \Sigma_{V,m} \left( \begin{bmatrix}\Phi_{V,m}^H \\ \Gamma_{V,m}^H\end{bmatrix}^H \Xi \begin{bmatrix}\Phi_{V,m}^H \\ \Gamma_{V,m}^H\end{bmatrix} - \begin{bmatrix} I &0 \\ 0 & 0 \end{bmatrix} \Xi \right) \right)
\Bigg\}
\end{align*}
In order for the above argument to be extremal in $\Sigma_V$, the term by which it multiplies in the trace above must be zero.  
This is the case if and only if $\Xi$ is a block Toeplitz matrix with each block being dimension $2n_u\times 2n_u$.

Defining
\begin{equation*}
\check{x}_n = \textrm{vec} \left\{ \Irk{Q}(0),...,\Irk{Q}(n) \right\}
\end{equation*}
it follows that there exist $\check{\gamma}_{n,0} \in \Set{R}_{>0}$, $\check{g}_n \in \Set{C}^{(n+1)n_u^2}$, and $\check{H}_n \in \Set{C}^{(n+1)n_u^2\times (n+1)n_u^2}$ with $\check{H}_n = \check{H}_n^H \succeq 0$ such that
\begin{equation} \label{56969707985994ut9g8gu}
\check{\gamma}_n = \check{\gamma}_{n,0} + \tfrac{1}{2} \left( \check{g}_n^H \check{x}_n + \check{x}_n^H \check{g}_n + \check{x}_n^H \check{H}_n \check{x}_n \right)
\end{equation}
where $\check{\gamma}_{n,0}$, $\check{g}_n$, and $\check{H}_n$ are quadratic functions of the first $(n+1)$ Markov coefficients of $\Tfz{P}_{fz}$, $\Tfz{P}_{uz}$, and $\Tfz{P}_{fy}$. 
For ease of notation, partition $\Xi$ as
\begin{align*}
\Xi = \begin{bmatrix} \Xi_{11} & \Xi_{12} & \Xi_{13} \\ \Xi_{12}^H & \Xi_{22} &\Xi_{23} \\ \Xi_{13}^H & \Xi_{23}^H & \Xi_{33} \end{bmatrix}
\end{align*}
where $\Xi_{22}$ has dimension $2nn_u\times 2nn_u$ and $\Xi_{33}$ has dimension $2n_u \times 2n_u$.
Also define $\check{\psi}_n$ as  
\begin{equation}
\check{\psi}_n \triangleq \textrm{vec} \left( \begin{bmatrix} \Xi_{23}^H & \Xi_{33} \end{bmatrix} \right).
\label{549tt9tjgh9h9gjtg09gjgh-9gj}
\end{equation}
Then there exists a matrix $\check{E}_n$ and a vector $\check{h}_n$ which depend only on the problem data, and such that 
\begin{align*}
-\tr\left( \Xi \begin{bmatrix} 0 & 0 \\ \Pi_{V,m} & \Delta_V \end{bmatrix} \right) 
=& \check{\psi}_n^H \left( \check{E}_n \check{x}_n + \check{h}_n \right)
\nonumber \\ &  \hspace{-12pt} -
 \tr\left( \begin{bmatrix} \Irk{V}(m) & \cdots & \Irk{V}(n+1) \end{bmatrix} \Xi_{13} \right).
\end{align*}
To be extremal we require that 
\begin{equation*}
\check{x}_n = -\check{H}_n^{-1} \left( \check{g}_n + 2\check{E}_n^H \check{\psi}_n \right)
\end{equation*}
where we have assumed the Hessian matrix $\check{H}_n$ is invertible.
(The case in which it is not can be handled similarly to the analysis in Section \ref{duality_section}.) 
We also require that $\Xi_{13} = 0$.
Note that because $\Xi$ is Toeplitz, this restricts its band to $n$, irrespective of how large $m$ is made. 

As such, the dual function is
\begin{align}
\check{\gamma}_{d,n} 
=& \check{\gamma}_{n,0} + \check{\psi}_n^H \check{h}_n + \check{h}_n^H \check{\psi}_n 
\nonumber \\ & 
- \tfrac{1}{2} \left( 2\check{E}_n^H \check{\psi}_n + \check{g}_n \right)^H \check{H}_n^{-1} \left( 2\check{E}_n^H \check{\psi}_n + \check{g}_n \right)
\label{595ggjg9g9hgjgh0gh9hj}
\end{align}
which is independent of $m$.
Taking the limit as $m\rightarrow\infty$, another application of the KYP Lemma gives that $\Xi \succeq 0$ if and only if there exists a matrix $\Sigma_\Xi \in \mathbb{C}^{2nn_u\times 2nn_u}$, with $\Sigma_\Xi = \Sigma_\Xi^H$, such that 
\begin{equation}
\begin{bmatrix} \Phi_{V,n}^H \Sigma_\Xi \Phi_{V,n} - \Sigma_\Xi & \Phi_{V,n}^H \Sigma_\Xi \Gamma_{V,n} - \Xi_{23} \\ \Gamma_{V,n}^H \Sigma_\Xi \Phi_{V,n} - \Xi_{23}^H & \Gamma_{V,n}^H \Sigma_\Xi \Gamma_{V,n} - \Xi_{33} \end{bmatrix}
\preceq 0 
\label{44845959574fgjgfurhygt}
\end{equation}

As such, we arrive at a finite-dimensional convex optimization to solve for a lower bound on optimality:
\begin{equation*}
\textrm{OP6}: \left\{ \begin{array}{ll}
\text{Given:} & \System{P}\in\Set{P}, \epsilon,\tau \in \Set{R}_{>0}, n \in \Set{Z}_{>0} \\
\text{Maximize:} & \check{\gamma}_{d,n} \text{ as in } \eqref{595ggjg9g9hgjgh0gh9hj} \\
\text{Domain:} & \check{\psi} \text{ as in } \eqref{549tt9tjgh9h9gjtg09gjgh-9gj}, \Sigma_\Xi = \Sigma_\Xi^H \\
\text{Constraint:} & \eqref{44845959574fgjgfurhygt}
\end{array} \right.
\end{equation*}
Denote as $\check{\gamma}_n^\star$ the optimal objective achieved in OP6.  
Then the following theorem establishes its properties relative to $\gamma^\star$, the optimized objective of OP3.

\begin{theorem} \label{lb_converence_theorem}
For each $n \in \Set{Z}_{>0}$, $\check{\gamma}_n^\star \leqslant \gamma^\star$.
Furthermore, $\check{\gamma}_n^\star$ is monotonically nondecreasing in $n$, with $\lim_{n\rightarrow\infty} \check{\gamma}_n^\star = \gamma^\star$.
\end{theorem}

\begin{rem} \label{dual_bound_comment}
The optimization domain of OP6 grows like $\mathcal{O}(n^2)$ due to the Lyapunov variable $\Sigma_\Xi$. As with OP4, this can be remedied by further usage of duality to obtain an optimization  with a domain that grows like $\mathcal{O}(n)$.  The process for this is similar to the conversion of OP4 to OP5.
\end{rem}

\section{Examples} \label{example}

\subsection{Example 1}

Suppose $\System{P}$ has a continuous-time realization \eqref{plant-input-output-model}, with the following model data
\begin{align*}
A =& \begin{bmatrix} 0 & 1 & 0 \\ -1 & -0.005 & -0.5 \\ 0 & 0.5 & -0.001 \end{bmatrix},
\ \ 
B_w = \begin{bmatrix} 0 \\ 1 \\ 0 \end{bmatrix}, 
\ \ 
B_u = \begin{bmatrix} 0 \\ 0 \\ 1 \end{bmatrix} \\
C_z =& \begin{bmatrix} -1 & -0.005 & -0.5 \\ 0 & 0 & 0 \end{bmatrix}, \quad
D_{uz} = \begin{bmatrix} 0 \\ 1 \end{bmatrix} \\
C_y =& \begin{bmatrix} 0 & 0 & 1 \end{bmatrix}, \quad 
D_{uy} = 0.
\end{align*}
The motivation for the above model is that it approximately characterizes a single degree-of-freedom vibratory oscillator coupled to a piezoelectric transducer, with the states corresponding to mechanical position, mechanical velocity, and capacitor voltage.
Control input $u$ is the current into the piezoelectric transducer, and disturbance input $w$ is the mechanical force on the oscillator.
The components of performance output vector $z$ contains the mechanical acceleration, the actuation current multiplied by $0.1$, and the mechanical velocity multiplied by $0.01$.
The model is nondimensionalized with natural frequency of $1$, fraction of critical damping of $0.25\%$, piezoelectric coupling factor of $0.5$, and a piezoelectric leakage time constant equal to $1000$.
We note that the unconstrained $\Tfs{\Set{H}}_2$-optimal controller is
not PR. (Indeed, it is open-loop unstable.)

\begin{figure}
\centering
\includegraphics[scale=.7]{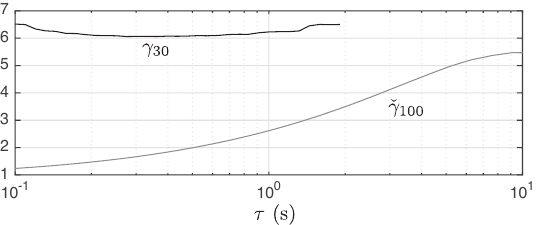}
\caption{Solutions to OP5 with $n=30$ (i.e., $\gamma_{30}$) and OP6 with $n=100$ (i.e., $\check{\gamma}_{100}$) over a domain of $\tau$ values.  Outside the support domain of $\gamma_{30}$, numerical problems prevented accurate solutions from being obtained for OP5.}
\label{example1_tau}
\end{figure}

\begin{figure}
\centering
\includegraphics[scale=.7]{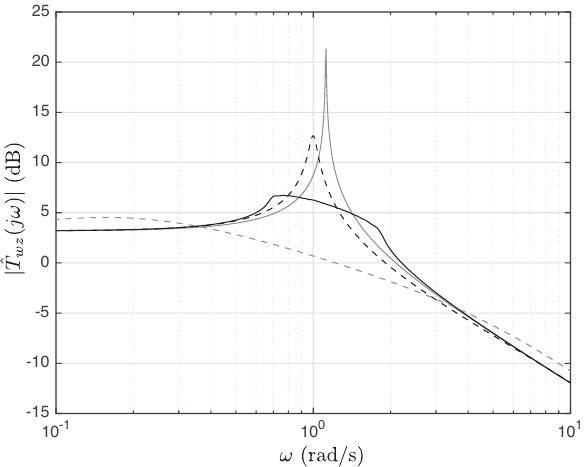}
\caption{$|\hat{T}_{wz}(j\omega)|$ for the uncontrolled system (gray), as well as closed-loop with controllers $\Tfs{K}_0$ (dash, black) and $\Tfs{K}_{100}^\star$ (solid, black), and unconstrained optimum (dash, gray) for Example 1}
\label{example1_bode}
\end{figure}

For this plant, we presume $\epsilon = 0.1$.
To choose a value of $\tau$, we note that for this example suitable values of $\tau$ are different for optimizing $\gamma_n$ versus $\check{\gamma}_n$.
Both OP5 and OP6 (especially if the dual formulation discussed in Remark \ref{dual_bound_comment} is implemented for OP6) are highly efficient to evaluate, even for $n$ values in excess of $100$.
\footnote{Solution times with $n = 100$ are only a few seconds, implemented in Matlab using the CVX parser with the Sedumi algorithm, and running on a 2021 MacBook Pro with an Apple M1 chip and 16GB of memory.}
As such, for a given value of $n$ it requires minimal computational effort to search for the best $\tau$ values by nesting OP5 and OP6 inside a line search.
For example, Figure \ref{example1_tau} shows values of $\gamma_n$ for $n=30$, evaluated over a domain of $\tau$. 
These results suggest $\tau = 0.5\text{s}$ is reasonable for OP5 with $n=30$.
Similarly, the plot shows $\check{\gamma}_n$ for $n=100$, evaluated over for the same domain in $\tau$.
These results indicate that $\tau = 10\text{s}$ is reasonable for OP6 for $n=100$. 
Henceforth we use these respective values of $\tau$, as $n$ is varied.

\begin{figure}
\centering
\includegraphics[scale=.7]{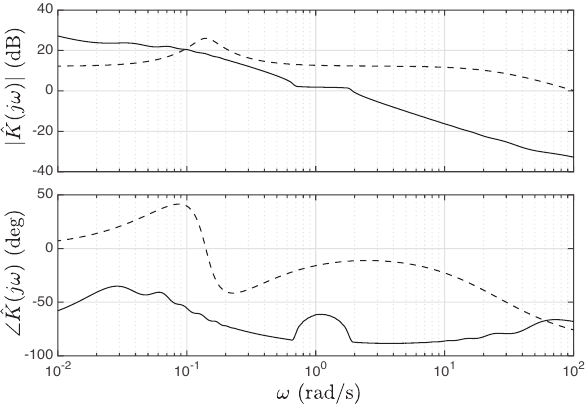}
\caption{$\Tfs{K}_0$ (dash, black) and $\Tfs{K}_{100}^\star$ (solid, black) for Example 1}
\label{example1_controller_bode}
\end{figure}

\begin{figure}
\centering
\includegraphics[scale=.7]{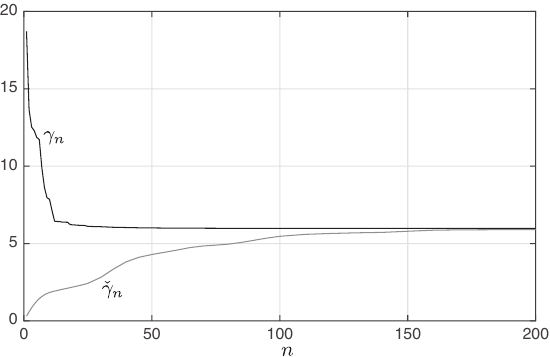}
\caption{Optimized objective $\gamma_n^\star$ and lower bound $\check{\gamma}_n^\star$ normalized by the value of the unconstrained $\Tfs{\Set{H}}_2$-optimal controller for Example 1}
\label{example1_performance_attained}
\end{figure}

Figure \ref{example1_bode} shows the magnitude of mapping $w \mapsto z$ for four scenarios: uncontrolled, closed-loop with controller $\Tfs{K}_0$, closed-loop with controller $\Tfs{K}_{100}^\star$, and unconstrained optimal.
Figure \ref{example1_controller_bode} shows the corresponding sub-optimal and optimal control designs.
The controller is positive-real, as expected, and there is a rather marked difference between $\Tfs{K}_0$ and $\Tfs{K}_{100}^\star$.
In particular, we note the unusual ``notch'' in the phase of $\Tfs{K}_{100}^\star(j\omega)$ for $\omega \in [0.5,1.1]$, over which the magnitude of nearly constant. 
Figure \ref{example1_performance_attained} shows the objective $\gamma_n^\star$ (normalized by the unconstrained optimal objective) attained by the optimal controller as a function of $n$, as well as the lower bound $\check{\gamma}_n^\star$.
This plot indicates that with $n = 30$, it is possible to reach near-optimality for the truncated solution.
However, the lower bound converges more slowly.
Nonetheless this result corroborates the claim that both $\gamma_n^\star$ and $\check{\gamma}_n^\star$ converge to $\gamma^\star$. 

\subsection{Example 2}
For this example, plant $\System{P}$ is characterized by
\begin{align*}
A =& \begin{bmatrix} 0_{4\times 4} & I_4 & 0_{4\times 1} \\ -M_P^{-1} K_P & -M_P^{-1} C_P & G_{P2} \\
0_{1\times 4} & 0_{1\times 4} & -10 \end{bmatrix}, \\
B_w =& \begin{bmatrix} 0_{4\times 1} \\ G_{P1}-G_{P2} \\ 20 \end{bmatrix}, \quad 
B_u = \begin{bmatrix} 0_{4\times 2} \\ M_P^{-1} N_P \\ 0_{1\times 2} \end{bmatrix}, \\
C_z =& \begin{bmatrix} \hat{e}_1 & \hat{e}_2 \end{bmatrix}^T \begin{bmatrix} -M_P^{-1}K_P & -M_P^{-1} C_P & 0_{4\times 1} \end{bmatrix} \\
D_{uz} =& \begin{bmatrix} \hat{e}_1 & \hat{e}_2 \end{bmatrix}^T M_P^{-1} N_P \\
C_y =& \begin{bmatrix} 0_{2\times 4} & N_P^T & 0_{2\times 1} \end{bmatrix}, \quad D_{uy} = 0_{2\times 2}
\end{align*}
where $\hat{e}_i$ is the Cartesian unit vector in direction $i$, and 
\begin{align*}
M_P =& \begin{bmatrix} 10 & 5 & 0 & 0 \\ 5 & 10 & 0 & 0 \\ 0 & 0 & 1 & 0 \\ 0 & 0 & 0 & 1 \end{bmatrix}
, & 
K_P =& \begin{bmatrix} 1 & 0 & -1 & 0 \\ 0 & 1 & 0 & -1 \\ -1 & 0 & 10 & 0 \\ 0 & -1 & 0 & 10 \end{bmatrix}
\end{align*}
\begin{align*}
C_P =& 0.01 \begin{bmatrix} 1 & 0 & -1 & 0 \\ 0 & 1 & 0 & -1 \\ -1 & 0 & 1 & 0 \\ 0 & -1 & 0 & 1 \end{bmatrix}, &
N_P =& \begin{bmatrix} 1 & 0 \\ 0 & 1 \\ -1 & 0 \\ 0 & -1 \end{bmatrix}
\end{align*}
\begin{align*}
G_{P1} =& \begin{bmatrix} 1 & 0 & 1 & 0 \end{bmatrix}^T, &
G_{P2} =& \begin{bmatrix} 0 & 1 & 0 & 1 \end{bmatrix}^T
\end{align*}
This system represents the linearized and nondimensionalized dynamics of a two-dimensional automotive suspension, in which the first four states are, respectively, the vertical displacements of the front and rear cabin, and the vertical displacements of the front and rear chassis, respectively.  The next four states are the corresponding velocities.
The disturbance $w$ is taken to be the second time-derivative of the road elevation at the front tires, modeled as white noise. The last state is used to approximate the delay between this disturbance, and the same corresponding disturbance injection at the rear tires. It is assumed that the only mechanical damping in the system is comprised of viscous dampers between the cabin and chassis. Control actuators are situated between the cabin and chassis as well, at both the front and rear axles. Performance outputs $z$ are the absolute accelerations of the cabin at the front and rear axles, and the control force inputs multiplied by $10^{-3}$. 

We presume  $\epsilon = 0.5$.
Conducting an line search analogous to the previous example to find suitable values of $\tau$, Figure \ref{example2_tau} shows values of $\gamma_n^\star$ and $\check{\gamma}_n^\star$, both for $n = 35$. In this example the same value of $\tau$ can be justifiably used for both, and here we choose $\tau = 0.9\text{s}$ for all subsequent evaluations.

\begin{figure}
\centering
\includegraphics[scale=.7]{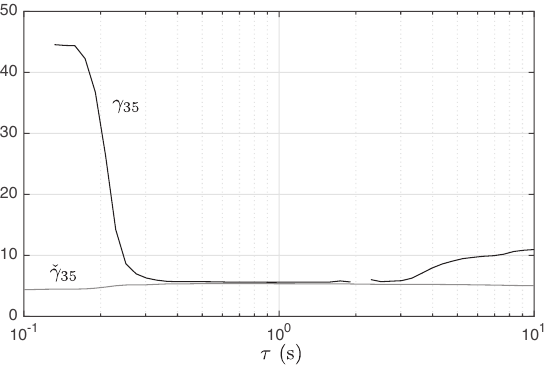}
\caption{Solutions to OP5 with $n=35$ (i.e., $\gamma_{35}$) and OP6 with $n=35$ (i.e., $\check{\gamma}_{35}$) over a domain of $\tau$ values.  Outside the support domain of $\gamma_{35}$, numerical problems prevented accurate solutions from being obtained for OP5.}
\label{example2_tau}
\end{figure}

\begin{figure}
\centering
\includegraphics[scale=.7]{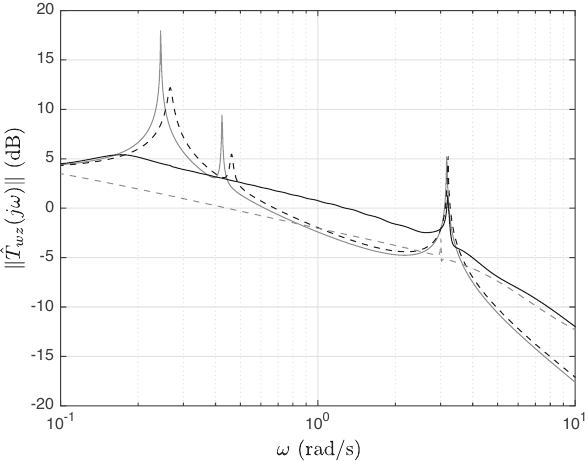}
\caption{$\|\hat{T}_{wz}(j\omega)\|$ for the uncontrolled system (solid gray), as well as closed-loop with controllers $\Tfs{K}_0$ (dash, black), $\Tfs{K}_{50}^\star$ (solid black), and unconstrained optimum (dash, gray) for Example 2}
\label{example2_bode}
\end{figure}

\begin{figure}
\centering
\includegraphics[scale=.7]{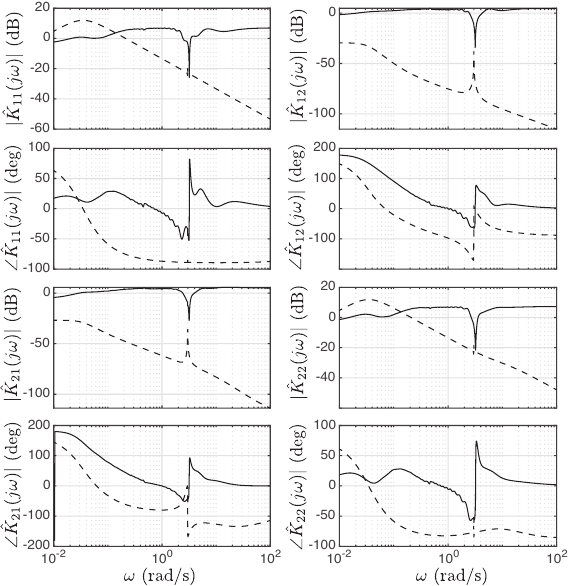}
\caption{$\Tfs{K}_0$ (dash, black) and $\Tfs{K}_{50}^\star$ (solid black) for Example 2}
\label{example2_controller_bode}
\end{figure}

\begin{figure}
\centering
\includegraphics[scale=.7]{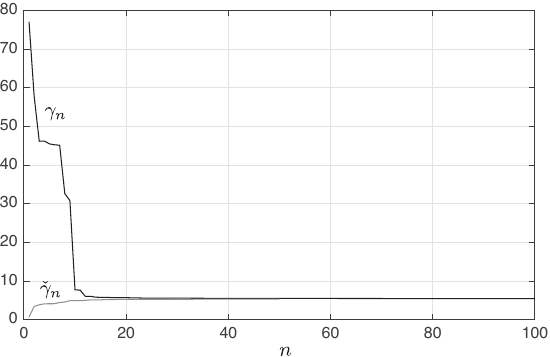}
\caption{Performance $\gamma_n^\star$ and $\check{\gamma}_n^\star$, normalized by the value of the unconstrained $\Tfs{\Set{H}}_2$-optimal controller for Example 2}
\label{example2_performance_attained}
\end{figure}

Figure \ref{example2_bode} shows the magnitude of response $\| \Tfs{T}_{wz}(j\omega) \|_2$ for the open-loop system, the closed-loop system with optimal controller $\Tfs{K}^\star_{50}$, and the closed-loop system with sub-optimal controller $\Tfs{K}_0$. 
Figure \ref{example2_controller_bode} is a Bode plot the $2\times 2$ matrix transfer function $\Tfs{K}_{50}^\star$ as well as $\Tfs{K}_0$. 
Finally, Figure \ref{example2_performance_attained} shows the convergence of objective $\gamma_n^\star$ and lower bound $\check{\gamma}_n^\star$ as $n$ is increased.
These results prompt many of the same general conclusions as in Example 1.

\section{Some Extensions} \label{extensions}


The scope of this paper was restricted to $\Set{H}_2$ performance objectives, but the methodology employed here applies more broadly, often with only minor modifications.

For example, extension to generalized-$\Set{H}_2$ performance objectives straightforward. 
Such objectives take the form
\begin{equation*}
\gamma = \max\limits_{i\in\{1,...,n_\gamma\}} \| \Tfs{T}_{w_iz_i} \|_{\Tfs{\Set{H}}_2}^2
\end{equation*}
where for each $i$, $w_i$ and $z_i$ are predefined subspaces of the overall input and output vectors respectively. 
Extension of the methodology to objectives of this form requires no substantive modifications the optimization methodology, with OP4 being modified only in the evaluation of $\gamma$, which becomes $\max\{\gamma_1,...,\gamma_{n_\gamma}\}$ where each $\gamma_i$ has the form of considered in this paper. 
Dual optimization OP5, as well as lower-bound optimization OP6, can also be formulated through standard use of Lagrange multipliers.
We do note that the sub-optimal control design methodology from Section \ref{suboptimal_solution} cannot be applied in the generalized $\Set{H}_2$ case unless $w_i$ is the same for all $i\in\{1,...,n_\gamma\}$ because otherwise certainty-equivalence does not hold. 

Extension to $\Set{H}_\infty$ performance objectives is also possible, with appropriate alterations to the methodology. 
One important change is in the relation of the performance measure to the discrete-time Youla parameter $\Tfz{Q}$, which for the case considered in the paper, is equation \eqref{Tfz_norm} with $\Tfz{T}_{fz}$ evaluated as in \eqref{Tfz_from_Q}.
In an $\Set{H}_\infty$ context, 
$
\gamma = \| \Tfz{T}_{wz} \|_{\Tfz{\Set{H}}_\infty}
$
with it being unnecessary to introduce the new discrete-time input $f$.
Another important change is that $\gamma$ must be evaluated using the KYP Lemma and an auxiliary Hermitian multiplier matrix, rather than being evaluated directly as a quadratic function of the control variables.
This results in some differences in the optimization domains of OP5 and OP6.

\section{Conclusions} \label{conclusions}

Feedback passivity affords certain advantages in many control applications. 
It allows for a feedback controller to be implemented in a manner which, theoretically, requires no power in order to operate perpetually.
Moreover, when the plant is itself passive, such controllers have unconditional guarantees on stability-robustness.
These observations provide a motivation to determine, with some precision, the best objective that is achievable with a passive feedback law.
This paper has presented a technique for accomplishing this, in the context of $\Set{H}_2$-optimal control.

The key contribution of this work is the recognition that feedback passivity may be imposed as a constraint on an optimal control problem,  in a manner that is convex.
This is done by explicit optimization of the Youla parameter associated with the feedback controller.
We have shown that in the context of $\Set{H}_2$-optimal control, this leads to an efficient and convex algorithm to solve for a controller which is optimal, given a prescribed number $n$ of nonzero Markov coefficients for the Youla parameter.
We have, furthermore, shown that the optimization domain for this algorithm grows linearly with $n$.

Although we have touched on a few extensions of these ideas to other types of objectives many questions remain.
The most important is the extension of the techniques considered here to guarantee bounds on robust performance, rather than just the assurance of robust stability.
To the author's knowledge, this remains an open problem.

Finally, we note the important role that control complexity often plays in problems involving feedback-passivity constraints. Our analysis here does not account for the favorability of controllers with low order, compared to those with higher order.  Additionally, the techniques proposed here do not allow for feedback laws to be decentralized (without sacrificing convexity). 
Extensions to accommodate such constraints would greatly enhance the domain of problems for which the proposed techniques have relevance.

\appendix

\subsection{Proof of Proposition \ref{485696879790786}}

We first note that $\gamma < \gamma_u + \gamma_m$ for some $\gamma_m\in\Set{R}$ if and only if there exist matrices $X=X^H$ and $Y=Y^H$ with $X \succ 0$ and $Y \succ 0$, such that
$\tr( C_K X C_K^H R ) <  \gamma_m$, and
\begin{align}
\label{5958568799707848499}
AX+ XA^H + B_w B_w^H 
+ Y C_K^H C_y X 
& \nonumber \\
+ X C_y^H C_KY & \preceq 0
\\
\label{585784948373939578595}
A_Z Y + Y A_Z^H +Y [  C_K^HC_Z + C_Z^H C_K & \nonumber \\ + \epsilon C_K^H C_K ] Y & \preceq 0.
\end{align}

Inequality \eqref{585784948373939578595} is nonconvex due to the fact that the term in the brackets is not positive semidefinite. 
However, this may be conservatively overbounded by recognizing that
\begin{align*}
& Y [  C_K^HC_Z + C_Z^H C_K + \epsilon C_K^H C_K ] Y 
\nonumber \\ 
&=  Y G_p^H G_p Y - Y G_n^H G_n Y \\
&\preceq Y G_p^H G_p Y + Z_0^{-1} G_n^H G_n Z_0^{-1}  
\nonumber \\ & \quad 
- Y G_n^H G_n Z_0^{-1} - Z_0^{-1} G_n^H G_n Y
\end{align*}
As such, \eqref{4949377384950960687659} implies \eqref{585784948373939578595} via a Schur complement.

Inequality \eqref{5958568799707848499} is nonconvex due to bilinear terms in variables $X$ and $Y$. It may be conservatively overbounded by a convex inequality by recognizing that for any $\Psi=\Psi^H\succ 0$, 
\begin{align*}
& [Y-Z_0^{-1}] C_K^HC_y [X-X_0] + [X-X_0] C_y^HC_K [Y-Z_0^{-1}]
\nonumber \\ &
\preceq [Y-Z_0^{-1}] C_K^H \Psi C_K [Y-Z_0^{-1}] 
\nonumber \\ & \quad
+ [X-X_0] C_y^H \Psi^{-1} C_y [X-X_0] 
\end{align*}
As such, subtracting the Lyapunov equation for $X_0$,  \eqref{5958568799707848499} is conservatively satisfied by 
\begin{align*}
A[X-X_0]+ [X-X_0]A^H &
\nonumber \\
+ [Y-Z_0^{-1}] C_K^H C_y X_0 
+ X_0 C_y^H C_K [Y-Z_0^{-1}]  &
\nonumber \\
+ Z_0^{-1} C_K^H C_y [X-X_0]
+ [X-X_0] C_y^H C_K Z_0^{-1} &
\nonumber \\
+ [Y-Z_0^{-1}] C_K^H \Psi C_K [Y-Z_0^{-1}]  &
\nonumber \\ 
+ [X-X_0] C_y^H \Psi^{-1} C_y [X-X_0]
& \preceq 0
\end{align*}
which is equivalent to constraint \eqref{0303838576869687484859} via Schur complements.

Next we show that $A_C$ is Hurwitz, using the Passivity and Separation Theorems.
For $B_K = Z_0^{-1} C_K^H$, we have that
\begin{align*}
& [A_Z+B_K(C_y+D_{uy}C_K)]^H Z_0 \nonumber \\ & + Z_0 [A_Z+B_K(C_y+D_{uy}C_K)] + \epsilon C_K^H C_K \preceq 0
\end{align*}
and $Z_0 \succ 0$, implying that $[A_Z+B_K(C_y+D_{uy}C_K)]$ has no positive-real eigenvalues, and any pure-imaginary eigenvalue $\lambda$ must have an associated eigenvector $\eta$ satisfying $C_K\eta = 0$. If this is the case then it implies that $\lambda$ is also an eigenvalue of $A+B_KC_y$ and $\eta$ the associated eigenvector.
Furthermore, because $B_K^H Z_0 \eta = 0$, it implies that
\begin{align*}
0 =& 
A_Z^H Z_0 \eta + Z_0 \eta \lambda 
\end{align*}
implying that $\lambda$ is an imaginary eigenvalue of $A_Z$ with associated eigenvector $Z_0 \eta$, which is nonzero because $Z_0 \succ 0$.  But $A_Z$ cannot have imaginary eigenvalues if the conditions of Theorem \ref{4949576070748303} hold, because it must be Hurwitz. 
As such we have a contradiction, and therefore conclude that $[A_Z+B_K(C_y+D_{uy}C_K)]$ is Hurwitz. 
Next, consider that both the systems
\begin{equation*}
\left[ \begin{array}{c|c} A & B_u \\ \hline C_y & D_{uy} \end{array} \right], \ 
\left[ \begin{array}{c|c} A+B_K(C_y+D_{uy}C_K)+B_uC_K & B_K \\ \hline C_K & 0 \end{array} \right]
\end{equation*}
are passive, and that both $A$ and $A+B_K(C_y+D_{uy}C_K)+B_uC_K$ are Hurwitz.
Consequently, via the Passivity Theorem, their negative feedback interconnection is asymptotically stable, implying that the matrix
\begin{equation*}
\begin{bmatrix} A & B_u C_K \\ -B_K C_y & A+B_K C_y + B_u C_K\end{bmatrix}
\end{equation*}
is Hurwitz. But this matrix is similar to 
\begin{equation*}
\begin{bmatrix} A_C & 0 \\ -B_K C_y & A_Z \end{bmatrix}
\end{equation*}
so we conclude that both $A_C$ is Hurwitz. 

It remains to show that feasibility is guaranteed.
Because $A_C$ is Hurwitz it is immediate that all solutions $X_0$ to inequality \eqref{4948567686878797} must be positive semidefinite, and that there exists $X_0 \succ 0$ satisfying the inequality. 
Noting that $Y = Z_0^{-1}$ satisfies \eqref{4949377384950960687659}, we have that $Y = Z_0^{-1}$ and $X = X_0$ are feasible.

\subsection{Proof of Theorem \ref{59595686877005}}

($\Rightarrow$) If $\System{K} \in \Set{K}_\epsilon$ then $\Tfs{K} \in \Tfs{\Set{H}}_\infty$ and \eqref{K_constraint} holds for all $s \in \Set{C}_{>0}$.
It follows that $\Tfz{K} \in \Tfz{\Set{H}}_\infty$ and
\begin{equation}
\Tfz{K}^H(q) + \Tfz{K}(q) - \epsilon \Tfz{K}(q) \Tfz{K}^H(q) \succ 0, \ \forall q \in \Set{O}_{>1}.
\label{proof_necessity_6574847567}
\end{equation} 
Substitute \eqref{K_from_Q}, pre-multiply by $\eta^H [I-\Tfz{Q}(q)\Tfz{P}_{uy}(q)]$ and post-multiply by $[I-\Tfz{Q}(q)\Tfz{P}_{uy}(q)]^H\eta$ for an arbitrary vector $\eta$ which does not lie in the null space of $[I - \Tfz{Q}(q) \Tfz{P}_{uy}(q)]^H$.
We then have that 
\begin{multline}
\eta^H \Big\{ \Tfz{Q}^H(q) + \Tfz{Q}(q) \\
- \Tfz{Q}(q) \big[ \Tfz{P}_{uy}^H(q)+\Tfz{P}_{uy}(q)+\epsilon I \big] \Tfz{Q}^H(q) \Big\} \eta > 0
\label{necessity_proof_47585757}
\end{multline}
Meanwhile if $[I - \Tfz{Q}(q) \Tfz{P}_{uy}(q)]^H \eta = 0$ then, from \eqref{K_from_Q}, it follows that 
\begin{equation}
\eta^H \left[ I - \Tfz{Q}(q) \Tfz{P}_{uy}(q) \right] \Tfz{K}(q) = \eta^H \Tfz{Q}(q)  
\end{equation}
Because $\| \Tfz{K}(q) \| \leqslant \tfrac{2}{\epsilon}$ it follows that the left-hand side is zero, and consequently that $\eta^H \Tfz{Q}(q) = 0$. 
If this is the case then \eqref{necessity_proof_47585757} holds.
We conclude that 
\begin{multline*}
\Tfz{Q}^H(q) + \Tfz{Q}(q) \\
- \Tfz{Q}(q) \big[ \Tfz{P}_{uy}^H(q)+\Tfz{P}_{uy}(q)+\epsilon I \big] \Tfz{Q}^H(q) \succeq 0
\end{multline*}
for all $q \in \Set{O}_{>1}$. 
If this is true, it also holds on the boundary of $\Set{O}_{>1}$, giving \eqref{Q_constraint} through a Schur complement.
To show that $\Tfz{Q}$ is analytic on $\Set{O}_{>1}$, we note that $\Tfz{K}$ and $\Tfz{P}_{uy}$ are, and thus it follows from \eqref{Q_from_K} that $\Tfz{Q}$ is, assuming it is uniformly bounded.
To show $\Tfz{Q}$ uniformly bounded in $\Set{O}_{>1}$, note that because \eqref{posreal_discrete_time_1} is true, the above conservatively implies that
\begin{equation*}
\Tfz{Q}^H(q) + \Tfz{Q}(q) - \epsilon \Tfz{Q}(q) \Tfz{Q}^H(q) \succeq 0, \ \forall q \in \Set{O}_{>1}.
\end{equation*}
If this is the case then $\| \Tfz{Q}(q) \| \leqslant \tfrac{2}{\epsilon}$ for all $q \in \Set{O}_{>1}$.
 
($\Leftarrow$) Assuming \eqref{Q_constraint}, let $\psi \in \Set{R}_{>0}$ be such that 
\begin{equation}
\sup\limits_{q\in\Set{O}_{>1}} \| \psi \Tfz{U}(q) \| < 1
\label{sufficiency_proof_56848467r487}
\end{equation}  
(Note that such a $\psi$ is guaranteed to exist because $\Tfz{Q}$ and $\Tfz{F}$ are both in $\Tfz{\Set{H}}_\infty$.) 
Then \eqref{Q_constraint} implies that 
\begin{equation*}
\left\| \left[I-\psi\Tfz{U}(e^{j\Omega}) \right] \left[ I + \psi\Tfz{U}(e^{j\Omega}) \right]^{-1} \right\| \leqslant 1, \ \forall \Omega \in [-\pi,\pi].
\end{equation*} 
Because \eqref{sufficiency_proof_56848467r487} holds, it follows that $[I - \psi\Tfz{U}] [I+\psi\Tfz{U}]^{-1}$ is analytic on $\Set{O}_{>1}$.
It consequently follows from the Maximum Modulus Theorem that 
\begin{equation*}
\left\| \left[I-\psi\Tfz{U}(q) \right] \left[ I + \psi\Tfz{U}(q) \right]^{-1} \right\| \leqslant 1, \ \forall q \in \Set{O}_{>1}.
\end{equation*} 
and therefore that 
\begin{equation*}
\Tfz{U}^H(q) + \Tfz{U}(q) \succeq 0, \ \forall q \in \Set{O}_{>1}.
\end{equation*}
Taking a Schur complement and rearranging, we have that
\begin{multline*}
\Tfz{Q}(q) [I - \Tfz{P}_{uy}^H(q) \Tfz{Q}^H(q)] + [I - \Tfz{Q}(q)\Tfz{P}_{uy}(q) ] \Tfz{Q}^H(q) 
\\ - \epsilon \Tfz{Q}(q) \Tfz{Q}^H(q) \succeq 0
\end{multline*}
Now suppose $\xi \neq 0$ is in the null space of $[I - \Tfz{Q}(q) \Tfz{P}_{uy}(q)]^H$.
Then it follows that $-\epsilon \xi^H \Tfz{Q}(q) \Tfz{Q}^H(q) \xi \geqslant 0$, implying that $\Tfz{Q}^H(q) \xi = 0$. But $\Tfz{P}_{uy}(q)$ is uniformly bounded for all $q \in \Set{O}_{>1}$ so we conclude that $\Tfz{P}_{uy}^H(q) \Tfz{Q}^H(q) \xi = 0$ and therefore $[I - \Tfz{Q}(q) \Tfz{P}_{uy}(q)]^H \xi = \xi \neq 0$, a contradiction.  We therefore conclude that $[I - \Tfz{Q}(q) \Tfz{P}_{uy}(q)]$ is full-rank for all $q \in \Set{O}_{>1}$. 
Post-multiply the above equation by $[I - \Tfz{Q}(q) \Tfz{P}_{uy}(q)]^{-H}$ and pre-multiply by $[I - \Tfz{Q}(q) \Tfz{P}_{uy}(q)]^{-1}$ to get \eqref{proof_necessity_6574847567}.
It remains to show that $\Tfz{K} \in \Tfz{\Set{H}}_\infty$.
To show that it is analytic on $\Set{O}_{>1}$, we first note that because $[I-\Tfz{Q}(q)\Tfz{P}_{uy}(q)]$ is nonsingular for all $q \in \Set{O}_{>1}$ and $\Tfz{Q}(q)$ and $\Tfz{P}_{uy}(q)$ are analytic, it follows that $[I-\Tfz{Q}(q)\Tfz{P}_{uy}(q)]^{-1}$ is analytic as well.
Using \eqref{K_from_Q}, we conclude that $\Tfz{K}(q)$ is analytic on $\Set{O}_{>1}$.
To show that it is uniformly bounded, we note that because the equation above holds, it follows that $\| \Tfz{K}(q) \| \leqslant \tfrac{2}{\epsilon}$ for all $q \in \Set{O}_{>1}$. 

\subsection{Proof of Theorem \ref{uniqueness_theorem}}

Suppose $\Tfz{Q}^\star_a$ and $\Tfz{Q}^\star_b$ are two minimizers, each rendering $\gamma = \gamma^\star$. 
Because OP3 is convex, it follows that all interpolated points $\Tfz{Q}^\star_\alpha = \Tfz{Q}^\star_a + \alpha(\Tfz{Q}^\star_b-\Tfz{Q}^\star_a)$ for $\alpha \in [0,1]$ are also minimizers achieving the same $\gamma = \gamma^\star$. 
Let $\Tfz{T}_{fz\alpha}^\star$ be the closed-loop transfer function corresponding to Youla parameter $\Tfz{Q}^\star_\alpha$.
Then 
\begin{align*}
0 =& \frac{1}{2} \frac{\partial^2}{\partial\alpha^2} 
\| \Tfz{T}_{fz\alpha} \|_{\Tfz{\Set{H}}_2}^2
\\
=& \frac{1}{2\pi} \int_{-\pi}^\pi \| 
\Tfz{P}_{uz}(e^{j\Omega})
[\Tfz{Q}_b^\star(e^{j\Omega})-\Tfz{Q}_a^\star(e^{j\Omega}] \Tfz{P}_{fy}(e^{j\Omega}) 
\|_2^2 d\Omega
\end{align*}
Now recall that $\Tfz{P}_{fy}$ and $\Tfz{P}_{uz}$ are uniformly bounded in $\Set{O}_{>1}$, and also that  constraint \eqref{Q_constraint} requires $\sup_{\Omega\in[-\pi,\pi]} \| \Tfz{Q}_a^\star(e^{j\Omega}) \| \leqslant \tfrac{2}{\epsilon}$, and similarly for $\Tfz{Q}_b^\star$. 
As such, each multiplicative component in the above integral is individually bounded.
Furthermore, the integrand itself is positive-semidefinite in $\Tfz{Q}_b^\star(e^{j\Omega}) - \Tfz{Q}_a^\star(e^{j\Omega})$, and is zero if and only if $\Omega \in \Set{W}'$.  
So if $\Set{W}'$ has zero measure, then 
\begin{align*}
&\int_{-\pi}^\pi \| \Tfz{Q}_b^\star(e^{j\Omega})-\Tfz{Q}_a^\star(e^{j\Omega}) \|_2^2 d\Omega  = 0
\end{align*}
implying that $\Tfz{Q}_b^\star = \Tfz{Q}_a^\star$ except on a set of zero measure.
Via the Plancharel theorem, this implies $\Irk{Q}_{a}^\star(k) = \Irk{Q}_{b}^\star(k)$,  $\forall k\in\Set{Z}_{\geqslant 0}$.
Finally, \eqref{49485768jgig8uhgh} follows from $\sup_{\Omega\in[-\pi,\pi]} \| \Tfz{Q}^\star(e^{j\Omega}) \| \leqslant \tfrac{2}{\epsilon}$, which implies it through Plancharel's theorem.

\subsection{Proof of Theorem \ref{45050607007594848586}}

Define $\Tfz{Q}_a^\star$ and $\Tfz{Q}_b^\star$ be solutions to OP3 with bilinear parameters $\tau_a$ and $\tau_b$ respectively. 
Then the solution for $\tau_a$ is associated with the continuous-time transfer function
\begin{equation*}
\Tfs{Q}_a^\star(s) \triangleq \Tfz{Q}_{a}^\star \left( \frac{1+2\tau_a s}{1-2\tau_a s} \right)
\end{equation*}
Then define the $\Tfz{Q}_{ab}(q)$ as the bilinear transformation of $\Tfs{Q}_{a}^\star$ with parameter $\tau_b$, i.e., 
\begin{align*}
\Tfz{Q}_{ab}(q) 
\triangleq & \Tfs{Q}_a^\star \left( \frac{1}{2\tau_b} \frac{q-1}{q+1} \right) 
= \Tfz{Q}_a^\star \left( \frac{1 +  \frac{\tau_a}{\tau_b} \frac{q-1}{q+1}}{1- \frac{\tau_a}{\tau_b} \frac{q-1}{q+1} } \right) 
\end{align*}
which is analytic and uniformly bounded for all $q \in \Set{O}_{>1}$ if $\Tfz{Q}_a^\star(q)$ is.
Furthermore, noting that $\Tfz{Q}_{ab}(e^{j\Omega_b}) = \Tfz{Q}_a^\star(e^{j\Omega_a})$ where
\begin{equation*}
\Omega_b = 2 \tan^{-1} \left[ \frac{\tau_b}{\tau_a} \tan(\tfrac{1}{2}\Omega_a) \right]
\end{equation*}
we conclude that $\Tfz{Q}_{ab}$ is feasible if and only if $\Tfz{Q}_a^\star$ is. 
Now suppose $\gamma_a^\star < \gamma_b^\star$.
Then this results in a contradiction because the value of $\gamma$ obtained with $\Tfz{Q}_{ab}$ is lower than $\gamma_b^\star$, and is feasible. 
As such, $\gamma_a^\star = \gamma_b^\star$.

To prove that $\| \Tfs{Q}_a^\star - \Tfs{Q}_b^\star \|_{\Tfs{\Set{H}}_2} = 0$, assume it is not the case. 
Then from the reasoning above, this implies that there exists a $\Tfz{Q}_{ab}$ such that $\Tfz{Q}_{ab}(e^{j\Omega}) \neq \Tfz{Q}_{b}^\star(e^{j\Omega})$ for all $\Omega$ in a set of finite measure, and which also attains the optimal objective.
This implies that $\Tfz{Q}_b^\star$ is nonunique.

\subsection{Proof of Claim \ref{quadratic_performance_objective}}
First recognize that $\Tfz{T}_{fz}$ as in \eqref{373484758567} can be represented via the partitioned state space realization
\begin{equation}
\Tfz{T}_{fz} = \left[ \begin{array}{l|l} \Phi_T & \Gamma_T \\ \hline \Pi_T & 0 \end{array} \right]
=
\left[ \begin{array}{ll|l}
\Phi_{T11} & \Phi_{T12}(x_n) & \Gamma_{T1} \\
0 & \Phi_{T22} & \Gamma_{T2} \\ \hline
\Pi_{T1} & \Pi_{T2}(x_n) & 0 
\end{array} \right]
\label{49474768697978478474}
\end{equation}
$\Phi_{T12}(x_n)$ and $\Pi_{T2}(x_n)$ are affine in $x_n$, while all other parameters are independent of $x_n$.
We have that 
\begin{equation*}
\gamma_n(x_n) = \tfrac{1}{\tau} \tr\left\{ \Gamma_T^H \Sigma_T \Gamma_T \right\}
\end{equation*}
where $\Sigma_T=\Sigma_T^H \succ 0 $ is the solution to 
\begin{equation} \label{430475606}
0 = \Phi_T^H \Sigma_T \Phi_T - \Sigma_T + \Pi_T^H \Pi_T
\end{equation}
Let $\Sigma_T$ be factored as
\begin{align*}
\Sigma_T
=& \begin{bmatrix} \Sigma_{T1}^{-1} & 0 \\ -\Lambda_T^H & I \end{bmatrix}^{-1} \begin{bmatrix} I & \Lambda_T \\ 0 & Y_T \end{bmatrix}
\end{align*}
where $\Sigma_{T1}=\Sigma_{T1}^H\succ 0$ and $Y_T=Y_T^H \succ 0$. 
Then, noting that
\begin{align*}
& \begin{bmatrix} \Sigma_{T1}^{-1} & 0 \\ -\Lambda_T^H & I \end{bmatrix} \Sigma_T \Phi_T \begin{bmatrix} \Sigma_{T1}^{-1} & -\Lambda_T \\ 0 & I \end{bmatrix} \nonumber \\ 
&\quad = \begin{bmatrix} \Phi_{T11} \Sigma_{T1}^{-1} & \Phi_{T12} - \Phi_{T11}\Lambda_T + \Lambda_T \Phi_{T22} \\ 0 & Y_T \Phi_{T22} \end{bmatrix}
\\
& \begin{bmatrix} \Sigma_{T1}^{-1} & 0 \\ -\Lambda_T^H & I \end{bmatrix} \Sigma_T \begin{bmatrix} \Sigma_{T1}^{-1} & -\Lambda_T \\ 0 & I \end{bmatrix} 
= \begin{bmatrix} \Sigma_{T1}^{-1} & 0 \\ 0 & Y_T \end{bmatrix}
\end{align*}
we have that \eqref{430475606} is equivalent to the following three equations
\begin{align*}
0 =& \Phi_{T11}^H \Sigma_{T1} \Phi_{T11} - \Sigma_{T1} + \Pi_{T1}^H \Pi_{T1} \\
0 =&  \Phi_{T11}^H \Sigma_{T1} \Phi_{T12} + \Pi_{T1}^H [-\Pi_{T1}\Lambda_T+\Pi_{T2}]
\nonumber \\
& -\Phi_{T11}^H \Sigma_{T1} \Phi_{T11} \Lambda_T + \Phi_{T11}^H \Sigma_{T1} \Lambda_T \Phi_{T22} 
\\
0 =& \Phi_{T22}^H Y_T \Phi_{T22} - Y_T \nonumber \\ & 
+ \left[ -\Pi_{T1}\Lambda_T + \Pi_{T2} \right]^H 
\left[ -\Pi_{T1}\Lambda_T + \Pi_{T2} \right] \nonumber \\
& + \left[ \Phi_{T12} - \Phi_{T11} \Lambda_T + \Lambda_T \Phi_{T22} \right]^H \Sigma_{T1} \nonumber \\ 
& \quad\quad \times \left[ \Phi_{T12} - \Phi_{T11} \Lambda_T + \Lambda_T \Phi_{T22} \right] 
\end{align*}
and 
\begin{align*}
& \gamma_n(x_n)  \nonumber \\ 
&= \tfrac{1}{\tau} \tr\left\{ [\Gamma_{T1}+\Lambda_T\Gamma_{T2}]^H \Sigma_{T1} [\Gamma_{T1}+\Lambda_T\Gamma_{T2}] \right\} 
\nonumber \\ & \quad + \tfrac{1}{\tau} \tr \big\{ [-\Pi_{T1}\Lambda_T+\Pi_{T2}] \Sigma_{T2} 
[-\Pi_{T1}\Lambda_T+\Pi_{T2} ]^H \big\}
\nonumber \\ & \quad + \tfrac{1}{\tau}\tr\big\{ \left[ \Phi_{T12} - \Phi_{T11} \Lambda_T + \Lambda_T \Phi_{T22} \right]^H \Sigma_{T1} 
\nonumber \\ & \quad\quad \times \left[ \Phi_{T12} - \Phi_{T11} \Lambda_T + \Lambda_T \Phi_{T22} \right] \Sigma_{T2} \big\}
\end{align*}
where $\Sigma_{T2}=\Sigma_{T2}^H\succ0$ is the solution to
\begin{equation*} 
0 = \Phi_{T22} \Sigma_{T2} \Phi_{T22}^H - \Sigma_{T2} + \Gamma_{T2}\Gamma_{T2}^H
\end{equation*}
Note that $\Lambda_T$ is affine in $x_n$, because $\Phi_{T12}$ and $\Pi_2$ are.
Meanwhile $\Sigma_{T1}$ and $\Sigma_{T2}$ are invariant on $x_n$.
We conclude that $\gamma_n(x_n)$ as formulated above is quadratic.

Convexity of $\gamma_n(x_n)$ (i.e., the claim that $H_n \succeq 0$) follows from the fact that because $\Phi_{T11}$ and $\Phi_{T22}$ are asymptotically stable (i.e., Schur), resulting in both $\Sigma_{T1}\succeq 0$ and $\Sigma_{T2}\succeq 0$. 

\subsection{Proof of Lemma \ref{KYP_theorem}}

The standard KYP theorem states the necessary and sufficient condition as the existence of $\Sigma_U=\Sigma_U^H\succ 0$ such that
\begin{equation*}
\begin{bmatrix} \Phi_U^H \Sigma_U \Phi_U - \Sigma_U & \Phi_U^H \Sigma_U \Gamma_U - \Pi_U^H \\
\Gamma_U^H \Sigma_U \Phi_U - \Pi_U & \Gamma_U^H \Sigma_U \Gamma_U - \Delta_U - \Delta_U^H \end{bmatrix} \preceq 0
\end{equation*}
where $\Phi_U$, $\Gamma_U$ and $\Pi_U$ are partitioned as in \eqref{495869669}. 
Let 
\begin{align*}
\Sigma_U =& 
\begin{bmatrix} \Sigma_{U1} & 0 \\ -\Lambda_U^H & I \end{bmatrix}^{-1} 
\begin{bmatrix} I & \Lambda_U \\ 0 & \Sigma_{U2} \end{bmatrix}
\end{align*}
Then the above condition is equivalently stated in terms of $\Sigma_{U1}$, $\Sigma_{U2}$, and $\Lambda_U$ as 
\begin{align*}
\begin{bmatrix} -\Sigma_{U1} & \star & \star \\
0 & \Phi_{U22}^H \Sigma_{U2} \Phi_{U22} - \Sigma_{U2} & \star \\
-\Pi_{U1} \Sigma_{U1} & \left( \begin{array}{c} \Gamma_{U2}^H \Sigma_{U2} \Phi_{U22} \\ +\Pi_{U1} \Lambda_U - \Pi_{U2} \end{array} \right) & \left( \begin{array}{c} \Gamma_{U2}^H \Sigma_{U2} \Gamma_{U2} \\ -\Delta_U-\Delta_U^H \end{array} \right) \end{bmatrix} &
\nonumber \\ 
+
\Theta_U^H \Sigma_{U1} \Theta_U
\preceq 0 & 
\end{align*}
where
\begin{equation*}
\Theta_U \triangleq \begin{bmatrix} \Phi_{U11} \Sigma_{U1} & \left( \begin{array}{c} 
-\Phi_{U11} \Lambda_U \\ + \Phi_{U12} \\ + \Lambda_U \Phi_{U22} \end{array} \right)
& \left( \begin{array}{c} \Gamma_{U1} \\ +\Lambda_U \Gamma_{U2} \end{array} \right) \end{bmatrix}
\end{equation*}
and where $\star$ denotes Hermitian symmetry.
Inequality \eqref{convex_pr_lmi} is obtained from the above by taking a Schur complement of the last term on the left-hand side, then taking another Schur complement of the top left block. 

\subsection{Proof of Theorem \ref{459569960707096069059}}

That $\gamma_n^\star$ is monotonically nonincreasing in $n$ is verified by observing that if $\{ \Irk{Q}_{1,n}^\star(0),...., \Irk{Q}_{1,n}^\star(n) \}$ are the optimal Markov parameters for order $n$, then $\{ \Irk{Q}_{1,n}^\star(0),...., \Irk{Q}_{1,n}^\star(n), 0 \}$ is feasible for $n \leftarrow n+1$.  
The uniqueness of the limit $\lim_{n\rightarrow\infty} \gamma_n^\star$ then follows from this result, together with the positive semidefiniteness of $\gamma$.

For parameter $\alpha\in[0,1]$, let $\Tfz{Q}_\alpha$ be defined as
\begin{equation*}
\Tfz{Q}_\alpha = (1-\alpha) \Tfz{Q}^\star + \alpha [\epsilon I + \Tfz{P}_{uy}]^{-1}.
\end{equation*}
With $\alpha = 0$, $\Tfz{Q}_\alpha = \Tfz{Q}^\star$ and therefore satisfies constraint \eqref{Q_constraint} by assumption.
Meanwhile at $\alpha = 1$, $\Tfz{Q}_\alpha = [\epsilon I +\Tfz{P}_{uy}]^{-1}$ which satisfies \eqref{Q_constraint} conservatively, with 
\begin{align*}
&\Tfz{Q}_\alpha(e^{j\Omega}) + \Tfz{Q}_\alpha^H(e^{j\Omega})  \nonumber 
\\
& - \Tfz{Q}_\alpha(e^{j\Omega}) [\epsilon I + \Tfz{P}_{uy}(e^{j\Omega}) + \Tfz{P}_{uy}^H(e^{j\Omega}) ] \Tfz{Q}_\alpha^H(e^{j\Omega}) 
\\
& = \epsilon [\epsilon I + \Tfz{P}_{uy}]^{-1} [\epsilon I + \Tfz{P}_{uy}]^{-H}
\\
&\succeq \nu I
\end{align*}
where $\nu \triangleq \epsilon / \| \epsilon I + \Tfz{P}_{uy} \|_{\Tfz{\Set{H}}_\infty}^2$.
Because \eqref{Q_constraint} is convex, $\Tfz{Q}_\alpha$ satisfies constraint \eqref{Q_constraint} conservatively for all $\alpha \in (0,1]$, with 
\begin{align*}
&\Tfz{Q}_\alpha(e^{j\Omega}) + \Tfz{Q}_\alpha^H(e^{j\Omega})  \nonumber 
\\
& - \Tfz{Q}_\alpha(e^{j\Omega}) [\epsilon I + \Tfz{P}_{uy}(e^{j\Omega}) + \Tfz{P}_{uy}^H(e^{j\Omega}) ] \Tfz{Q}_\alpha^H(e^{j\Omega}) \succeq \nu \alpha I
\end{align*}

The objective value achieved by $\Tfz{Q}_\alpha$, denoted $\gamma_\alpha$, is
\begin{align*}
\gamma_\alpha =& \frac{1}{\tau} \| \Tfz{P}_{fz} - \Tfz{P}_{uz} \Tfz{Q}^\star \Tfz{P}_{fy} + \alpha \Tfz{P}_{uz} [(\epsilon I+\Tfz{P}_{uy})^{-1}-\Tfz{Q}^\star] \Tfz{P}_{fy} \|_{\Tfz{\Set{H}}_2}^2 
\\
\leqslant & \gamma^\star + 
2 \alpha \sqrt{\frac{\gamma^\star}{\tau}} \| \Tfz{P}_{uz} [(\epsilon I+\Tfz{P}_{uy})^{-1}-\Tfz{Q}^\star] \Tfz{P}_{fy} \|_{\Tfz{\Set{H}}_2}
\\ & 
+
\frac{\alpha^2}{\tau} \| \Tfz{P}_{uz} [(\epsilon I+\Tfz{P}_{uy})^{-1}-\Tfz{Q}^\star] \Tfz{P}_{fy} \|_{\Tfz{\Set{H}}_2}^2
\end{align*}
where the triangle inequality has been used.
The norms in the above expression are finite, and consequently there is a nondecreasing function $\alpha(\delta) : \Set{R}_{\geqslant 0} \rightarrow [0,1]$ with $\alpha(0) = 0$, such that $\gamma_{\alpha(\delta)} - \gamma^\star \leqslant \delta$ for all $\delta \in \Set{R}_{\geqslant 0}$.

Now, for a given $\alpha \in [0,1]$, define the associated incremental Youla parameter $\Tfz{Q}_{1,\alpha}$ as
\begin{equation*}
\Tfz{Q}_{1,\alpha} = \Tfz{M}_0^{-1} [\Tfz{Q}_\alpha - \Tfz{Q}_0] \Tfz{N}_0^{-1}
\end{equation*}
Let $\Tfz{Q}_{1,\alpha,n}$ be related to $\Tfz{Q}_{1,\alpha}$ via its Markov coefficients as
\begin{equation*}
\Irk{Q}_{1,\alpha,n}(k) \triangleq \left\{ \begin{array}{lll}
 \Irk{Q}_{1,\alpha}(k) &:& k \in \{0,...,n\} \\
0 & :& k \notin \{0,...,n\}
\end{array}\right.
\end{equation*}
with the associated total Youla parameter then being $\Tfz{Q}_{\alpha,n} \triangleq \Tfz{Q}_0 + \Tfz{M}_0 \Tfz{Q}_{1,\alpha,n} \Tfz{N}_0$.
Because $\Tfz{Q}_\alpha$ satisfies \eqref{Q_constraint} it is known to be in $\Tfz{\Set{H}}_\infty$ and therefore $\Tfz{\Set{H}}_2$.
Because $\Tfz{Q}_\alpha \in \Tfz{\Set{H}}_2$,
$\lim_{n\rightarrow\infty} \gamma(\Tfz{Q}_{\alpha,n}) = \gamma_\alpha$.
Furthermore, there exists a nondecreasing function $\eta : \Set{R}_{\geqslant 0} \rightarrow \Set{Z}_{>0}$ with $\eta(0)=0$, such that each $\beta \in \mathbb{R}_{>0}$, $\| \Tfz{Q}_\alpha - \Tfz{Q}_{\alpha,\eta(\beta)} \|_{\Tfz{\Set{H}}_\infty}^2 \leqslant 1/\beta$.
Constraint \eqref{Q_constraint} is conservatively satisfied for $\Tfz{Q}_{\alpha,\eta(\beta)}$ if 
\begin{align*}
\Tfz{Q}_\alpha(e^{j\Omega}) + \Tfz{Q}_\alpha^H(e^{j\Omega}) 
-\Tfz{Q}_\alpha(e^{j\Omega}) \Tfz{S}^H(e^{j\Omega}) \Tfz{S}(e^{j\Omega}) \Tfz{Q}_\alpha^H(e^{j\Omega}) &
\\
\succeq \left( 2\beta^{-1}+ \| \Tfz{S} \|_{\Tfz{\Set{H}}_\infty}^2 \beta^{-2} \right) I
\end{align*}
where $\Tfz{S}$ is the spectral factor from Theorem \ref{PRLemma}.
We conclude that for each $\delta \in \mathbb{R}_{>0}$, choosing
\begin{equation*}
\beta(\delta) \geqslant 
\frac{\| \Tfz{S} \|_{\Tfz{\Set{H}}_\infty}^2} {-1 + \sqrt{ 1 + \| \Tfz{S} \|_{\Tfz{\Set{H}}_\infty}^2 \nu \alpha(\delta) }}
\end{equation*}
results in $\Tfz{Q}_{\alpha,\eta(\beta(\delta))}$ which is both feasible under constraint \eqref{Q_constraint}, and in the domain of OP4 with $n = \eta(\beta(\delta))$.

We can therefore construct a sequence of values $\{\beta_n, n \in \mathbb{Z}_{>0}\}$, each component of which constitutes the largest value of $\beta \in \mathbb{R}_{\geqslant 0}$ for which $n = \eta(\beta)$.
Similarly we construct sequence $\{\delta_n, n \in \mathbb{Z}_{>0}\}$, each component of which constitutes the smallest value of $\delta \in \mathbb{R}_{\geqslant 0}$ for which $\beta_n = \beta(\delta)$.
Note that this sequence is monotonically decreasing with $\lim_{n\rightarrow\infty} \delta_n = 0$. 
Because $\Tfz{Q}_{\alpha(\delta_n),n}$ is in the feasibility domain for OP4 for each $n \in \Set{Z}_{>0}$, it follows that for each $n \in \mathbb{Z}_{>0}$,
\begin{equation*}
\gamma( \Tfz{Q}_{\alpha(\delta_n),n} ) \geqslant \gamma_n^\star.
\end{equation*}
But 
\begin{align*}
\lim\limits_{n \rightarrow \infty} \gamma( \Tfz{Q}_{\alpha(\delta_n),n} )
= \lim\limits_{\alpha \rightarrow 0} \gamma( \Tfz{Q}_\alpha ) = \gamma^\star
\end{align*}
and because $\gamma_n^\star \geqslant \gamma^\star$ for all $n \in \mathbb{Z}_{>0}$, we conclude that $\gamma_n^\star \rightarrow \gamma^\star$ as $n\rightarrow \infty$.
Convergence of $\Tfz{Q}_n^\star \rightarrow \Tfz{Q}^\star$ as in \eqref{76978696585} follows from the uniqueness of $\Tfz{Q}^\star$ as the minimizer of OP3.

\subsection{Proof of Theorem \ref{lb_converence_theorem}}

That $\check{\gamma}_n^\star \leqslant \gamma^\star$ follows immediately the fact that $\check{\gamma} \leqslant \gamma$ for each $\Tfz{Q}$, and the feasibility domain for OP6 is a relaxation of that of OP3.
Furthermore, because the domain of $\Tfz{V}$ is tightened as $n$ increases, and because the value of $\check{\gamma}$ for any fixed $\Tfz{V}$ increases with $n$, it follows that $\check{\gamma}_n^\star$ is monotonically nondecreasing in $n$, and therefore converges to a well-defined limit.
To show that this limit is $\gamma^\star$, let $\Irk{Q}_n^\star(k), k \in \{0,...,n\}$ be the solution to OP6 for a given $n$, and define
\begin{align*}
\Irk{\Theta}_n(k) \triangleq & \left\{ \begin{array}{lll} \tfrac{n+1-k}{n+1} \Irk{Q}_n^\star(k) &:& k \in \{0,...,n\} \\ 0 &:& k \notin \{0,...,n\} \end{array} \right. \\
\Irk{\Psi}_n(k) \triangleq & \left\{ \begin{array}{lll} \tfrac{n+1-k}{n+1} \Irk{F}(k) &:& k \in \{0,...,n\} \\ 0 &:& k \notin \{0,...,n\} \end{array} \right.
\end{align*}
It follows that 
\begin{align*}
\Tfz{\Theta}_n(e^{j\Omega}) =& \frac{1}{2\pi} \int_{-\pi}^\pi \tfrac{1}{n+1}|\zeta(\Omega)|^2 \Tfz{Q}_n^\star(e^{j\Omega}) d\Omega \\
\Tfz{\Psi}_n(e^{j\Omega}) =& \frac{1}{2\pi} \int_{-\pi}^\pi \tfrac{1}{n+1}|\zeta(\Omega)|^2 \Tfz{F}(e^{j\Omega}) d\Omega
\end{align*}
where $\zeta(\Omega) \triangleq \tfrac{\sin((n+1)\Omega/2)}{\sin(\Omega/2)}$.
Next, note that because $\Tfz{Q}_n^\star$ satisfies feasibility for OP6, it must be that \eqref{V_constraint} holds when convolved with $|\zeta(\Omega)|^2$, and consequently
\begin{equation*}
\begin{bmatrix} \Tfz{\Theta}_n(e^{j\Omega}) & 0 \\ \Tfz{\Theta}_n(e^{j\Omega}) & \Tfz{\Psi}_n(e^{j\Omega}) \end{bmatrix}
+ \begin{bmatrix} \Tfz{\Theta}_n(e^{j\Omega}) & 0 \\ \Tfz{\Theta}_n(e^{j\Omega}) & \Tfz{\Psi}_n(e^{j\Omega}) \end{bmatrix}^H
\succeq 0
\end{equation*}
holds for all $\Omega \in [-\pi,\pi]$.
Also, note that because $\Tfz{F}(e^{j\Omega})$ is continuous in $e^{j\Omega}$ it follows that $\Tfz{\Psi}_n(e^{j\Omega})$ converges pointwise to $\Tfz{F}_n(e^{j\Omega})$ as $n\rightarrow\infty$ for all $\Omega \in [-\pi,\pi]$.
As a consequence, for each $\kappa \in \Set{R}_{>0}$ there exists a $n > 0$ and a $\Tfz{Q}$ satisfying \eqref{Q_constraint}, for which $\| \Tfz{Q} - \Tfz{\Theta}_n \|_{\Tfz{\Set{H}}_2} < \kappa$. 
So if we assume $\lim_{n\rightarrow\infty} \check{\gamma}_n^\star < \gamma^\star$, this implies the existence of a $\Tfz{Q}$ that is vanishingly close to the feasibility domain of OP3, but renders an objective $\gamma$ that is finitely lower than $\gamma^\star$.
But this is impossible because the objective $\gamma$ is continuous in $\Tfz{Q}$, leading to a contradiction.  




\vspace{-25pt}
\begin{IEEEbiography}[{\includegraphics[width=1in,height=1.25in,clip,keepaspectratio]{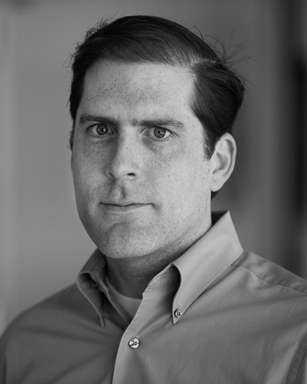}}]{Jeff Scruggs} (M'11) is a Professor in the Departments of Civil and Environmental Engineering, as well as of Electrical and Computer Engineering, at the University of Michigan, which he joined in 2011.  He received his B.S. and M.S. degrees in Electrical Engineering from Virginia Tech in 1997 and 1999, respectively, and his Ph.D. in Applied Mechanics from Caltech in 2004. Prior to joining the University of Michigan, he held postdoctoral positions at Caltech and the University of California, San Diego, and was on the faculty at Duke University from 2007-11. His current research is in the areas of mechanics, vibration, energy, and control.
\end{IEEEbiography}

\end{document}